\documentclass[11pt, bezier]{article}
\usepackage{amsmath, amssymb, amsfonts, euscript, pifont, mathrsfs, latexsym, graphicx, yfonts, float}

\textwidth = 16 cm \textheight = 22 cm \oddsidemargin =2.5 mm
\evensidemargin = 0 cm \topmargin = -1 cm
\parskip = 1.5 mm

%%%%%%%%%%%%%%%%%%%%%%%%%%%%%%%%%%%%%%%%%%%%%%%%%%%%%%%%%%%%%%%%%%%%%

\newtheorem{prethm}{{\bf Theorem}}

\newenvironment{thm}{\begin{prethm}\sl{\hspace{-0.5
               em}{\bf.}}}{\end{prethm}}

\newtheorem{prepro}[prethm]{{\bf Proposition}}

\newtheorem{prelem}[prethm]{{\bf Lemma}}

\newenvironment{lem}{\begin{prelem}\sl{\hspace{-0.5
               em}{\bf.}}}{\end{prelem}}

\newtheorem{predeff}[prethm]{{\bf Definition}}

\newenvironment{deff}{\begin{predeff}\rm{\hspace{-0.5
               em}{\bf.}}}{\end{predeff}}

\newtheorem{precor}[prethm]{{\bf Corollary}}

\newenvironment{cor}{\begin{precor}\sl{\hspace{-0.5
               em}{\bf.}}}{\end{precor}}

\newtheorem{preconj}[prethm]{{\bf Conjecture}}

\newtheorem{preremark}[prethm]{{\bf Remark}}

\newenvironment{remark}{\begin{preremark}\rm{\hspace{-0.5
               em}{\bf.}}}{\end{preremark}}

\newtheorem{preexample}[prethm]{{\bf Example}}

\newtheorem{preproof}{{\bf\textsf{Proof.}}}

\newenvironment{proof}[1]{\begin{preproof}{\rm
               #1}\hfill{$\Box$}}{\end{preproof}}

%%%%%%%%%%%%%%%%%%%%%%%%%%%%%%%%%%%%%%%%%%%%%%%%%%%%%%%%%

\DeclareMathAlphabet{\mathpzc}{OT1}{pzc}{m}{it}

%%%%%%%%%%%%%%%%%%%%%%%%%%%%%%%%%%%%%%%%%%%%%%%%%%%%%%%%%

\title{\bf\LARGE   Integral trees with given nullity }

\author{\large E. Ghorbani$^{\,\rm 1, 2}$ \quad \quad  A. Mohammadian$^{\,\rm 2}$ \quad \quad B. Tayfeh-Rezaie$^{\,\rm 2}$\\[.4cm]
{\sl $^{\rm 1}$Department of Mathematics, K.N. Toosi University of Technology,}\\
{\sl P.O. Box 16315-1618, Tehran, Iran}\\[0.3cm]
{\sl $^{\rm 2}$School of Mathematics, Institute for Research in Fundamental
Sciences\,(IPM),}\\{\sl P.O. Box
19395-5746, Tehran, Iran }
\\[0.5cm]{
$\mathsf{e\_ghorbani@ipm.ir}$ \quad\quad  $\mathsf{ali\_m@ipm.ir}$ \quad\quad  $\mathsf{tayfeh}$-$\mathsf{r@ipm.ir}$}}

\date{}

\begin{document}
\maketitle

\vspace{5mm}

\begin{abstract}

\noindent A graph is called integral if all eigenvalues of its adjacency matrix consist entirely of integers.
We prove that for a given nullity more than  $1$, there are only  finitely many  integral trees.
Integral trees with nullity at most $1$ was already characterized by Watanabe and   Brouwer.
It is  shown that integral trees with nullity $2$ and $3$ are unique.

\vspace{5mm}
\noindent {\bf Keywords:}  adjacency  eigenvalue, eigenvalue multiplicity, nullity, integral tree. \\[.1cm]
\noindent {\bf AMS Mathematics Subject Classification\,(2010):}   05C50,  05C05, 15A18.
\end{abstract}

\vspace{5mm}

\section{Introduction}

For a graph $G$, we denote by $V(G)$, the vertex set of $G$ and the {\sl order} of $G$ is defined as $|V(G)|$.
The {\sl adjacency matrix}  of   $G$, denoted by  $A(G)$,  has  its   rows and
columns  indexed by  $V(G)$  and its     $(u, v)$-entry  is $1$ if the vertices $u$ and
$v$ are adjacent and $0$ otherwise.
The   {\sl characteristic polynomial} of  $G$,   denoted by $\varphi(G; x)$, is
the characteristic polynomial of $A(G)$.  We will drop the indeterminate $x$ for the simplicity of notation. The zeros of $\varphi(G)$
are called the {\sl eigenvalues} of $G$. Note that  $A(G)$ is a real
symmetric matrix so that all eigenvalues of $G$ are real numbers.
We denote the eigenvalues of $G$ in non-increasing  order
as $\lambda_1(G)\geqslant\cdots\geqslant\lambda_n(G)$, where $n=|V(G)|$.
The graph $G$ is  said to be {\sl integral} if all  eigenvalues of $G$  are integers.
The {\sl nullity} of  $G$ is defined as  the nullity of $A(G)$, which is
equal to the multiplicity of $0$ as an   eigenvalue  of $G$. Quite a few number of articles  on nullity of graphs   have been published. We refer the reader to see \cite{gb}   and   references therein  for a survey on this topic.

The notion of integral graphs was
first introduced in   \cite{har}.
A lot of articles deal with integral graphs. We refer the reader to \cite{BCRS} for  a comprehensive  but rather old survey on the subject.
Here, we are concerned  with   integral trees.  These objects are  extremely
rare and hence  very  difficult to find.
For a long time,    it was  an open question  whether there exist integral
trees with arbitrarily large diameter   \cite{watsc}.
Recently,    this question was  affirmatively answered in    \cite{csi, gmt},  where the authors constructed  integral trees for any diameter.
It is well known that
the tree on two vertices is the only integral tree with nullity zero  \cite{wata0}.
Thereafter,  Brouwer   proved that any  integral  tree  with nullity $1$ is a
subdivision of a star graph where the order of the star graph  is a perfect square \cite{bro}. The latter  result
has motivated us to investigate integral trees from the `nullity' point of view.

In this article, we prove that with a fixed nullity more than $1$, there are only finitely many
integral trees.
We also characterize  integral trees with nullity $2$ and $3$  showing that
there is a unique integral tree with  nullity $2$ as well as  a unique integral tree with  nullity $3$.

\section{Reduced trees}

In this section we introduce `reduced trees' and derive some properties of their spectrum.
We shall use these properties in the next section to prove our finiteness result.

We denote  the  multiplicity of $\lambda$ as an eigenvalue of  a graph $G$ by ${\rm mult}(G; \lambda)$.
We  also denote  the number of eigenvalues of  $G$ in the interval $(-1, 1)$ by $m(G)$.
Write $P_n$ for the path graph of order  $n$.
For a vertex $v$ of  a  graph  $G$, we say that
there are {\sl $k$  pendant  $P_2$ at  $v$} if removing $v$ from $G$ increases the number of $P_2$ components by $k$.
A graph $G$ is called {\sl reduced} if there exists at most one pendant  $P_2$ at each vertex of $G$.

The following folklore fact,     which  is stated   in \cite[p.  49]{crs} as an exercise,  shows  that the reduced graph obtained from a  graph $G$ by removing some pendant $P_2$
has the same nullity as $G$.

\begin{lem}\label{=}
Let $G$ be a graph and   $v\in V(G)$ be of  degree $1$. If $u$ is the unique neighbor of $v$, then the nullities of $G$ and $G-\{u, v\}$ are the same.
\end{lem}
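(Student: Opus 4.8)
The plan is to exhibit an explicit linear isomorphism between the kernel of $\EuScript{A}(G)$ and the kernel of $\EuScript{A}(G-\{u,v\})$, since the nullity of a graph is by definition the dimension of the kernel of its adjacency matrix. The whole argument rests on the eigenvalue equation $\EuScript{A}(G)x=0$ together with the hypothesis that $v$ is a pendant vertex having $u$ as its only neighbor.

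First I would order the vertices of $G$ so that $v$ comes first, $u$ second, and the remaining vertices of $W:=\mathpzc{V}(G)\setminus\{u,v\}$ afterward. Writing $A'=\EuScript{A}(G-\{u,v\})$ and letting $b$ be the $0/1$ vector recording the neighbors of $u$ inside $W$, the adjacency matrix acquires the block form
\[
\EuScript{A}(G)=\begin{pmatrix} 0 & 1 & 0 \\ 1 & 0 & b^{\mathsf{T}} \\ 0 & b & A' \end{pmatrix},
\]
where the first row and column are sparse precisely because $u$ is the unique neighbor of $v$.

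Next I would read off the equations that a null vector $x=(x_v,x_u,y)$ must satisfy, with $y$ the restriction of $x$ to $W$. The row indexed by $v$ forces $x_u=0$; substituting this into the rows indexed by $W$ yields $A'y=0$; and the row indexed by $u$ gives $x_v=-b^{\mathsf{T}}y$. Hence every null vector of $\EuScript{A}(G)$ is completely determined by its restriction $y$, which must lie in $\ker A'$, and conversely each $y\in\ker A'$ produces the null vector $(-b^{\mathsf{T}}y,\,0,\,y)$ of $\EuScript{A}(G)$. The assignment $y\mapsto(-b^{\mathsf{T}}y,0,y)$ is therefore a linear bijection $\ker A'\to\ker\EuScript{A}(G)$, so the two kernels have equal dimension and the nullities coincide.

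The argument is elementary linear algebra, so there is no serious obstacle; the one point requiring care is the very first deduction $x_u=0$, which is exactly where the hypothesis that $v$ has $u$ as its \emph{unique} neighbor is used. Without it, the row of $v$ would involve other coordinates and the clean decoupling of $A'y=0$ from the rest would break down. An alternative route would invoke the standard pendant-vertex recursion $\varphi(G)=\lambda\,\varphi(G-v)-\varphi(G-\{u,v\})$ and compare the orders of vanishing at $\lambda=0$, but the kernel bijection above is more transparent and avoids having to track root multiplicities.
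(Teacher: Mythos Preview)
Your argument is correct. The block decomposition is set up properly: because $v$ is pendant with unique neighbor $u$, the $v$-row of $\EuScript{A}(G)$ has a single nonzero entry, forcing $x_u=0$; this in turn decouples the $W$-block into $A'y=0$, and the $u$-row then pins down $x_v=-b^{\mathsf{T}}y$. The map $y\mapsto(-b^{\mathsf{T}}y,0,y)$ is visibly linear and injective, and the preceding analysis shows it is surjective onto $\ker\EuScript{A}(G)$, so the nullities agree.

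As for comparison with the paper: there is nothing to compare. The paper does not prove this lemma at all; it is introduced with the phrase ``well known fact'' and left unproved. Your kernel-bijection proof is one of the standard ways to justify it, and the alternative you sketch via the pendant-vertex recursion $\varphi(G)=x\,\varphi(G-v)-\varphi(G-\{u,v\})$ is the other common route. Both are perfectly acceptable; the first is cleaner here since it avoids any discussion of how the multiplicity of $0$ in $\varphi(G-v)$ relates to that in $\varphi(G-\{u,v\})$.
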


The  following result      is immediately deduced from  Lemma \ref{=} and is proved  in \cite[Theorem 2]{cve}.

\begin{cor}\label{pm}
The size of the maximum matching in a  tree  of order $n$ with nullity $h$ is $\tfrac{n-h}{2}$.
\end{cor}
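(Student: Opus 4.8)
The plan is to reduce the statement to the identity that, for any forest $F$ on $n$ vertices, the nullity of $F$ equals $n - 2\mu(F)$, where $\mu(F)$ denotes the size of a maximum matching of $F$. Since the nullity is $n$ minus the rank of $\EuScript{A}(F)$, this is the classical fact that the rank of the adjacency matrix of a forest is twice its matching number; once it is established, a tree $T$ of order $n$ and nullity $h$ satisfies $h = n - 2\mu(T)$, which rearranges to $\mu(T) = \tfrac{n-h}{2}$ and in particular shows that $n-h$ is even. I would prove the identity for forests rather than only for trees, because the natural induction deletes two adjacent vertices and thereby leaves the class of trees.

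I would argue by induction on $n$, the cases $n \le 1$ being immediate. For the inductive step there are two cases. If $F$ has an isolated vertex $w$, then $\EuScript{A}(F)$ acquires a zero row and column, so the nullity of $F$ is one more than that of $F - w$, while $\mu(F) = \mu(F-w)$; applying the induction hypothesis to the forest $F - w$ on $n-1$ vertices gives the claim. Otherwise $F$ has at least one edge and no isolated vertex, so it contains a leaf $v$ whose unique neighbor I call $u$. By Lemma \ref{=} the nullities of $F$ and $F - \{u,v\}$ coincide, and $F - \{u,v\}$ is again a forest, now on $n-2$ vertices, to which the induction hypothesis applies.

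The one point requiring a separate argument is the matching identity $\mu(F) = 1 + \mu(F - \{u,v\})$. For this I would show that some maximum matching of $F$ saturates the leaf $v$: given a maximum matching $M$ not saturating $v$, the vertex $u$ must already be saturated (otherwise $M$ together with the pendant edge $uv$ would be larger), say by an edge $uw$, and replacing $uw$ by $uv$ yields a maximum matching that uses the pendant edge. Deleting that edge then shows $\mu(F) = 1 + \mu(F - \{u,v\})$. Combining the two synchronized recursions---the nullity is unchanged under the deletion while the matching number drops by exactly one---closes the induction and yields $h = n - 2\mu(T)$ for the tree $T$.

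The computation itself is light; the main thing to get right is the bookkeeping that keeps the nullity and the matching number in step under the deletion, together with the observation that the induction must be carried out for forests so that Lemma \ref{=} and the isolated-vertex reduction can be applied repeatedly. Neither step is genuinely hard, but both are needed for the induction to remain within a class closed under the relevant vertex deletions.
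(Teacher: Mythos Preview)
Your argument is correct and follows precisely the route the paper intends: the paper merely states that the corollary is ``immediately deduced from Lemma~\ref{=}'', and your induction on forests, using Lemma~\ref{=} at a pendant edge together with the standard swap showing $\mu(F)=1+\mu(F-\{u,v\})$, is exactly how one makes that deduction rigorous. Your observation that the induction must be run over forests rather than trees is the right bookkeeping point and is implicit in the paper's one-line justification.
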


The first and second statements of the following  theorem    are  respectively  obtained from the Cauchy interlacing theorem for symmetric matrices \cite[Corollary 2.5.2]{bh} and   the Perron--Frobenius  theory of nonnegative matrices \cite[Theorem 2.2.1]{bh}.

\begin{thm}\label{int}
If $G$ is a graph of order $n$ and $H$ is an induced  subgraph of $G$ of order  $m$, then $\lambda_{n-m+i}(G)\leqslant\lambda_i(H)\leqslant\lambda_i(G)$ for $i=1, \ldots, m$.
Moreover, if $G$ is a connected graph and $G\neq H$, then $\lambda_1(H)<\lambda_1(G)$.
\end{thm}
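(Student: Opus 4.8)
The plan is to translate the statement into linear algebra and apply the variational (Courant--Fischer) characterization of eigenvalues, reserving the Perron--Frobenius theorem for the strict part. Write $A=\EuScript{A}(G)$, and note that since $H$ is an \emph{induced} subgraph, $B=\EuScript{A}(H)$ is exactly the principal submatrix of $A$ obtained by deleting the rows and columns indexed by $\mathpzc{V}(G)\setminus\mathpzc{V}(H)$. Let $U\subseteq\mathbb{R}^n$ be the $m$-dimensional coordinate subspace of vectors supported on $\mathpzc{V}(H)$; identifying $\mathbb{R}^m$ with $U$, one has $x^{\mathsf T}Ax=x^{\mathsf T}Bx$ for every $x\in U$, because every term that would involve a deleted coordinate vanishes.

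For the upper bound I would use the max--min form of Courant--Fischer, $\lambda_i(H)=\max_{S\subseteq U,\,\dim S=i}\ \min_{0\neq x\in S}(x^{\mathsf T}Ax)/(x^{\mathsf T}x)$. Since every $i$-dimensional subspace of $U$ is in particular an $i$-dimensional subspace of $\mathbb{R}^n$, enlarging the family over which the maximum is taken can only increase the value, giving $\lambda_i(H)\leqslant\lambda_i(G)$. Dually, from the min--max form $\lambda_{n-m+i}(G)=\min_{\dim S=m-i+1}\ \max_{0\neq x\in S}(x^{\mathsf T}Ax)/(x^{\mathsf T}x)$, restricting the minimization to subspaces $S\subseteq U$ can only increase the value, and the identity on $U$ identifies that restricted minimum with $\lambda_i(H)$; hence $\lambda_{n-m+i}(G)\leqslant\lambda_i(H)$. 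These two steps are entirely routine once $U$ and the Rayleigh-quotient identity on $U$ are in place.

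The substantive part is the strict inequality. Assume $G$ is connected and $G\neq H$, so that $A$ is an irreducible nonnegative matrix and at least one vertex of $G$ lies outside $H$. By Perron--Frobenius, $\lambda_1(G)$ is a simple eigenvalue whose eigenspace is spanned by a strictly positive vector. Suppose toward a contradiction that $\lambda_1(H)=\lambda_1(G)$, and let $y$ be a unit eigenvector of $B$ for $\lambda_1(H)$; viewing $y$ as an element $\tilde y\in U$, the identity above gives $\tilde y^{\mathsf T}A\tilde y=\lambda_1(H)=\lambda_1(G)$, so $\tilde y$ attains the maximum of the Rayleigh quotient of $A$ and is therefore an eigenvector of $A$ for $\lambda_1(G)$. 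But $\tilde y$ vanishes at every vertex of $G$ outside $H$, contradicting the fact that every eigenvector for $\lambda_1(G)$ is a nonzero scalar multiple of a strictly positive vector. Hence $\lambda_1(H)<\lambda_1(G)$. The one point to justify with care is that a maximizer of the Rayleigh quotient must lie in the top eigenspace, which follows by expanding $\tilde y$ in an orthonormal eigenbasis of $A$; combined with the simplicity and positivity furnished by Perron--Frobenius, this is exactly where the strict claim rests, and I expect it to be the main obstacle relative to the routine interlacing inequalities.
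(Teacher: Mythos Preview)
Your argument is correct and is the standard route: Cauchy interlacing via Courant--Fischer for the two-sided inequality, and Perron--Frobenius for the strictness when $G$ is connected. The only minor remark is that in the strict part you should note $m\geqslant1$ so that $\lambda_1(H)$ is defined and $\tilde y\neq0$; otherwise every step is sound.

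As for comparison with the paper: there is nothing to compare. The paper states this theorem as background (it is the classical interlacing theorem together with the well-known strict monotonicity of the spectral radius for connected graphs) and gives no proof of its own. Your write-up supplies exactly the proof one would expect a reader to have in mind.
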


As a consequence of Theorem  \ref{int}, one readily deduces that $\lambda_1(G)>\lambda_2(G)$ for any connected graph $G$  of order at least  $2$.

\begin{lem}\label{delp2asl}
Let $G$ be a graph and   $v\in V(G)$ be of  degree $1$.  If $u$ is the unique neighbor of $v$, then  $m(G-\{u, v\})\leqslant m(G)$.
\end{lem}

\begin{proof}{
Note that $m(G-\{u, v\})=m(G-u)-1$. Applying  Theorem \ref{int} for  $G$ and $G-u$, we see  that $m(G-u)-1\leqslant m(G)$, implying  the result.
}\end{proof}

The following lemma  generalizes a result in \cite{wata0}.

\begin{lem}\label{m0} The tree $P_2$ is the only tree  with no eigenvalue in $(-1, 1)$.
\end{lem}

\begin{proof}{We have $m(P_1)=1$.
By induction on $n$, we will  show  for any tree $T$ of order $n\geqslant3$ that  $m(T)\geqslant1$.
Let     $v$ be  a vertex of  degree $1$ in a tree $T$   and $v'$ be its neighbor.
If $T_v=T-\{v, v'\}$ has a connected  component other than $P_2$, then it follows from    Lemma \ref{delp2asl}, $m(P_1)=1$, and   the induction hypothesis that  $m(T)\geqslant m(T_v)\geqslant1$, as desired.
Otherwise, all the connected components of $T_v$ must be $P_2$. Indeed, we may  assume that this property  holds for each pendant vertex $v$ of $T$. This  forces that $T=P_4$.  But $m(P_4)=2$ by  \cite[Table 2]{cvet}, completing the proof.
}\end{proof}

\begin{thm}\label{finitered}
For any nonnegative integer $k$, there are  finitely many reduced  trees with exactly $k$ eigenvalues in  $(-1, 1)$.
\end{thm}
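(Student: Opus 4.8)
The plan is to proceed by strong induction on $k$, the base cases $k=0$ and $k=1$ being settled by Corollary \ref{m0} and Lemma \ref{kzero}. For a reduced tree $T$ with $m(T)=k$ I would bound its order by controlling two structural parameters: its maximum degree, and the lengths of its maximal chains of degree-two vertices. Once both are bounded in terms of $k$, the tree is a subdivision, with subdivisions of bounded length, of the tree $S$ formed by the vertices of $T$ of degree $1$ or at least $3$; since $\Delta(T)$ is bounded, $|S|$ is in turn controlled by the number of vertices of degree at least $3$ (the \emph{branch vertices}), so everything reduces to bounding that number.

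First I would show $\Delta(T)\le k+1$. Fix a vertex $v$ and consider the components of $T-v$. As $T$ is reduced, none of them is $P_2$, so by Corollary \ref{m0} each component has at least one eigenvalue in $(-1,1)$. The eigenvalues of $T-v$ in $(-1,1)$ are distributed among these components, and interlacing (Theorem \ref{int}), applied exactly as in the proof of Lemma \ref{delp2asl}, gives $m(T-v)\le m(T)+1=k+1$. Since the number of components equals $\deg v$, we obtain $\deg v\le k+1$.

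Next I would bound the length of a maximal path $x=q_0,q_1,\dots,q_\ell=y$ whose interior vertices $q_1,\dots,q_{\ell-1}$ all have degree two in $T$; note that, $T$ being reduced, its endpoints $x$ and $y$ are branch vertices. Deleting $x$ and $y$ leaves, among the components of $T-\{x,y\}$, the induced path $P_{\ell-1}$ on $q_1,\dots,q_{\ell-1}$. Two applications of the interlacing estimate of the previous step give $m(T)\ge m(T-\{x,y\})-2\ge m(P_{\ell-1})-2$. Since $m(P_n)$ grows linearly in $n$, this forces $\ell$ to be bounded by a function of $k$, completing the reduction of the theorem to bounding the number of branch vertices.

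To bound the number of branch vertices I would again induct on $k$, deleting a branch vertex $v$ adjacent to a leaf (such a vertex exists and has degree at least $3$ because $T$ is reduced). Each of the at most $k+1$ components of $T-v$ has strictly fewer than $k$ eigenvalues in $(-1,1)$: there are at least three of them, each contributes at least one by Corollary \ref{m0}, and their total is at most $k+1$. The main obstacle is that these components need not be reduced: the deletion can turn an attaching vertex into a degree-two vertex next to a leaf, and, as the comb graph shows, the ensuing reduction may cascade through an entire component, so one cannot bound a component by its reduced form plus a bounded correction. The way I would resolve this is to note that a component fails to be reduced only along the chain issuing from its new root, so it lies in the slightly larger class of trees that are reduced away from a single root; I would run the induction for this class, using the degree bound and the chain bound above to keep these root chains short, so that the bound on the number of branch vertices at parameter $<k$ propagates back to $T$. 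Verifying that the components remain in this class, and that the cascade is governed by $m$ rather than by the order, is the delicate point of the argument.
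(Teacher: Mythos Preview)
The paper's argument is far shorter than yours: pick any vertex $v$ of degree at least $3$ (one exists, since a reduced tree with $k\geqslant2$ cannot be a path), apply interlacing to get $m(T-v)\leqslant k+1$, note that no component of $T-v$ is $P_2$ because that is precisely the definition of ``reduced'', and conclude via Corollary~\ref{m0} that each of the $\geqslant3$ components satisfies $1\leqslant m(C_i)\leqslant k-1$; then invoke the induction hypothesis. No separate discussion of maximum degree, chain lengths, or branch vertices appears.

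You have, however, put your finger on a genuine soft spot that the paper glosses over: the components $C_i$ need not themselves be reduced (for instance, the caterpillar with spine $u\!-\!v\!-\!w$ and $2,1,2$ attached leaves is reduced, yet deleting any of its three branch vertices leaves a non-reduced piece), so the induction hypothesis as stated does not literally apply. Your proposed repair---enlarging to trees ``reduced away from a single root''---is a natural idea, and your degree and chain bounds do extend to that class; but it does not close the gap. When you delete a branch vertex $w$ from such a rooted tree $(C,r)$, the component that contains the old root $r$ acquires \emph{two} distinguished vertices (the old $r$ and the new attachment point) and can fall outside your class; and if $\deg_C(w)=3$ with one of the three pieces being the single permitted $P_2$, the remaining two pieces need only satisfy $m\leqslant m(C)$, not $m<m(C)$, so your inductive parameter can stall. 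You yourself flag this as ``the delicate point'', and indeed it is: without an argument that the cascade is governed by $m$ rather than by the order, the recursion is circular and the proposal remains incomplete---it is longer than the paper's proof but leaves the very same issue open.
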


\begin{proof}{We prove the assertion by  induction on $k$.
By Lemma \ref{m0}, we may assume  that $k\geqslant1$.
Let $T$ be a reduced tree of order $n$ and with $m(T)=k$.
First suppose  that there exists  $v\in V(T)$ such  that three of the connected components  $T_1,\ldots, T_d$ of $T-v$ are   not $P_2$. From   Theorem \ref{int}, $m(T-v)\leqslant k+1$.
Since $T$ is reduced, at most one of  $T_1, \ldots, T_d$ is $P_2$.
Hence,   Lemma \ref{m0} yields that   $d-1\leqslant\sum_{i=1}^dm(T_i)\leqslant k+1$ and $m(T_i)+2\leqslant\sum_{i=1}^dm(T_i)\leqslant k+1$ for   $i=1, \ldots, d$.
It follows that $d\leqslant k+2$ and $m(T_i)\leqslant k-1$ for $i=1,\ldots,d$.
Note that if  some  $T_i$ is  not  reduced, then   it has exactly  one  vertex with more than one pendant $P_2$ and such a vertex has exactly two pendant
$P_2$. So the assertion follows by the induction hypothesis.
Now  suppose otherwise. This  means that  any vertex of $T$ is of degree at most $3$ and
all vertices of degree $3$ of  $T$ have a pendant $P_2$.  Hence,  $T$ is   obtained from  a path graph  $P_t$   by attaching one  pendant $P_2$ at some vertices of degree $2$ in $P_t$, implying    $n\leqslant3t-4$. Moreover,   it follows from   Lemma \ref{delp2asl} that   $m(P_t)\leqslant m(T)$.
We know from  \cite[p.  9]{bh} that    $\lambda_i(P_t)=2\cos\tfrac{\pi\ell}{t+1}$ for  $\ell=1, \ldots, t$. Therefore, $m(P_t)\geqslant\tfrac{t-2}{3}$ and so    $t\leqslant3k+2$ which in turn implies that   $n\leqslant9k+2$. This completes the proof.
}\end{proof}

For later use, we need the following refinement of Lemma \ref{delp2asl}.

\begin{lem}\label{delp2}
Let $T$ be a tree with at least one   pendant $P_2$ at $v\in V(T)$. Then increasing the number of pendant $P_2$ at  $v$ by one, leaves the number of eigenvalues in $(-1,1)$ unchanged and increases the multiplicity of $1$ by one.
\end{lem}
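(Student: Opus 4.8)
The plan hinges on one consequence of the hypothesis: in any tree that has a pendant $P_2$ at $v$, every eigenvector for the eigenvalue $1$ vanishes at $v$. Writing the pendant $P_2$ as $v$-$a$-$b$ with $\deg a=2$ and $\deg b=1$, the eigenvalue equations at $b$ and at $a$ read $x_a=x_b$ and $x_v+x_b=x_a$, forcing $x_v=0$. This holds for $T$ and, since $T'$ (the enlarged tree, with new pendant $P_2$ written $v$-$a'$-$b'$) also has a pendant $P_2$ at $v$, for $T'$ as well. Throughout I write $x_w$ for the entry of a vector $x$ at a vertex $w$ and use the equation $\sum_{w\sim u}x_w=\lambda x_u$ at each vertex $u$.

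For the multiplicity of $1$ I would establish both inequalities for $\text{{\sl mult}}\,(T';1)$ directly. For the lower bound, extend each eigenvector $x$ of $T$ for $1$ to $T'$ by setting it to $0$ on $a'$ and $b'$. The equations at $b'$ and at $v$ hold automatically, while the equation at $a'$ reduces to $x_v=0$, which is exactly the vanishing just noted; thus the zero-extension is an eigenvector of $T'$, and the extension map is injective with image inside the set of eigenvectors vanishing at $a'$. The hypothesis enters again through the pre-existing pendant $P_2$ $\,v$-$a$-$b$: the vector $\xi$ supported on the two pendant paths with $\xi_a=\xi_b=1$ and $\xi_{a'}=\xi_{b'}=-1$ is an eigenvector for $1$ with $\xi_{a'}=-1\neq0$, hence independent of that image, giving $\text{{\sl mult}}\,(T';1)\geq\text{{\sl mult}}\,(T;1)+1$. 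For the upper bound, the linear map $y\mapsto y_{a'}$ on the eigenspace of $T'$ for $1$ has image of dimension at most $1$, and every $y$ in its kernel satisfies $y_{a'}=y_{b'}=0$ and restricts to an eigenvector of $T$ for $1$ (only the equation at $v$ needs checking); hence $\text{{\sl mult}}\,(T';1)\leq\text{{\sl mult}}\,(T;1)+1$, and equality follows.

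For the invariance of $m$ I would argue through a global count rather than a local comparison. Trees are bipartite, so the spectrum of any tree is symmetric about $0$, giving $n=2p+2\,\text{{\sl mult}}\,(G;1)+m(G)$ for a tree $G$ of order $n$, where $p$ counts its eigenvalues greater than $1$. Applying this to $T'$ and to $T$, subtracting, and substituting $n(T')-n(T)=2$ together with the multiplicity identity just proved, I obtain $m(T')-m(T)=-2\bigl(p(T')-p(T)\bigr)$. Interlacing (Theorem \ref{int}) gives $\lambda_i(T')\geq\lambda_i(T)$ for all relevant $i$, hence $p(T')\geq p(T)$ and therefore $m(T')\leq m(T)$. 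The reverse inequality $m(T)\leq m(T')$ is precisely Lemma \ref{delp2asl}, applied to the removal of the new pendant $P_2$ from $T'$; the two inequalities then force $m(T')=m(T)$.

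The main obstacle is the open interval in the definition of $m$: interlacing between $T$ and the two-vertex-larger graph $T'$ only pins the number of eigenvalues in $(-1,1)$ down to within an additive $2$, so no local perturbation estimate can by itself yield equality. The device that breaks the deadlock is to trade the open-interval count for the boundary eigenvalue $1$, where the second pendant $P_2$ supplies exact control, and then route that exact count back through the bipartite symmetry identity, using the earlier monotonicity Lemma \ref{delp2asl} for the one inequality symmetry does not deliver. I would also confirm in the write-up that the hypothesis is genuinely needed and not merely convenient: the vector $\xi$ cannot be formed unless $v$ already carries a pendant $P_2$, and indeed attaching a $P_2$ to a vertex with no prior pendant $P_2$ (for instance passing from $P_1$ to $P_3$) does not raise $\text{{\sl mult}}\,(\cdot;1)$.
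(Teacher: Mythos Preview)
Your proof is correct. The argument for $\text{\sl mult}(T';1)=\text{\sl mult}(T;1)+1$ is essentially the paper's: both rely on the observation that any eigenvector for $1$ vanishes at $v$ (forced by the existing pendant $P_2$), the zero-extension of the old eigenspace, and the explicit vector supported on two pendant $P_2$'s with values $\pm1$. You are somewhat more careful than the paper about the upper bound, isolating it via the linear functional $y\mapsto y_{a'}$, whereas the paper simply asserts that the constructed set is a basis.

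Where you diverge is in deducing $m(T')=m(T)$. The paper passes through the one-vertex deletion $T'-a$: it combines $\text{\sl mult}(T'-a;1)=\text{\sl mult}(T';1)-1$ with Lemma~\ref{=} (giving $\text{\sl mult}(T'-a;0)=\text{\sl mult}(T';0)+1$) and then squeezes $m(T'-a)=m(T')+1$ out of one-step interlacing; since $T'-a=T\cup\{b\}$, this yields $m(T')=m(T)$. You instead compare $T$ and $T'$ directly via the bipartite identity $n=2p+2\,\text{\sl mult}(G;1)+m(G)$, obtain $m(T')-m(T)=-2(p(T')-p(T))$, use interlacing for $p(T')\geqslant p(T)$, and close with Lemma~\ref{delp2asl} for the reverse inequality. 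Your route is more transparent about where the parity constraint comes from and reuses the already-proved monotonicity lemma; the paper's route avoids that lemma but hides a small counting argument behind the word ``clearly''. Both are short and valid.
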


\begin{proof}{
Suppose that $T'$ is the resulting tree from $T$ by adding  two   new vertices $a$ and $b$ where  $a$ is joined to both $b$ and $v$.
Let $c$ and $d$ be the   vertices of a pendent $P_2$ of $T$ at $v$.
Let $k={\rm mult}(T; 1)$ and  assume that  $\{x_1, \ldots, x_k\}$ is  a  basis for the eigenspace  $\mathscr{E}$ of $T$ corresponding to  eigenvalue $1$ when    $k\geqslant1$.  Since each  vector  $x\in\mathscr{E}$ takes the same value on $c$ and $d$, we conclude that
$x$ vanishes on $v$.  For each  $i$ with  $1\leqslant i\leqslant k$, extend $x_i$ to find  the vector  $y_i$   defined on  $V(T')$     with value $0$ on $\{a, b\}$. Also,  define the  vector $y_{k+1}$ so   that $y_{k+1}(a)=y_{k+1}(b)=1$,   $y_{k+1}(c)=y_{k+1}(d)=-1$,   and $0$ elsewhere.
It is now readily  verified    that  $\{y_1, \ldots, y_{k+1}\}$ is   an independent subset of the  eigenspace   of $T'$ corresponding to  eigenvalue $1$.
By    Theorem \ref{int}, ${\rm mult}(T'; 1)-1\leqslant{\rm mult}(T'-a; 1)=k$.
Hence,   ${\rm mult}(T'; 1)=k+1$, as desired.
Furthermore,  from  ${\rm mult}(T'; 1)={\rm mult}(T'-a; 1)+1$ and   by  applying   Theorem \ref{int} for  $T'$ and $T'-a$, one  concludes  that $m(T'-a)=m(T')+1$.
Since $m(T'-a)=m(T)+1$, we deduce  that  $m(T)=m(T')$. This completes  the proof.
}\end{proof}

\section{Finiteness of integral trees with a given nullity}

In this section we present our main result which states that for every integer $h\geqslant2$, there are  finitely many integral   trees with nullity  $h$.

\begin{deff} \label{stype}
By considering  a tree $T$ as  a connected bipartite graph, one finds   a unique  pair $\{A, B\}$ for which  $A$ and $B$ are  disjoint independent subsets of $T$  with  $V(T)= A\cup B$ and $B\neq\varnothing$.  Define $\mathcal{S}(T; A)$ to be  the tree obtained from $T$ by attaching a pendant vertex to each vertex
in $B$.
\end{deff}

\begin{deff}
Let $T$ be a tree.  For every distinct  vertices  $v_1, \ldots, v_k\in V(T)$  and  nonnegative integers  $s_1, \ldots, s_k$, we denote by
$T(v_1, \ldots, v_k; s_1, \ldots, s_k)$   the resulting tree from  $T$  by  attaching  $s_i$  pendant    $P_2$ at $v_i$ for $i=1, \ldots,   k$.
\end{deff}

\begin{remark}\label{weyl}
Let $T$ be a tree of order $n$ and $k\geqslant1$. For every distinct  vertices  $v_1, \ldots, v_k\in V(T)$  and  nonnegative integers  $s_1, \ldots, s_k$ with    $s_1\geqslant \cdots  \geqslant s_k$,  employing  the Courant--Weyl inequalities   \cite[Theorem  2.8.1]{bh} yields  that
$$
\lambda_i\big(T(v_1, \ldots, v_k; s_1, \ldots, s_k)\big)\geqslant \sqrt{s_i+1}+\lambda_n(T),
$$
for $i=1, \ldots,   k$.  Therefore,  when all  values  $s_1, \ldots, s_k$ go to infinity, then the $k$  largest eigenvalues of   $T(v_1, \ldots, v_k; s_1, \ldots, s_k)$ tend to infinity.
\end{remark}

\begin{deff} \label{defs}
Let  $p$, $q$, and $r$ be   nonnegative integers and let $T_1$, $T_2$, $T_3$, and $T_4$ be the trees are depicted in Figure \ref{def} with  some specified  vertices. We will  denote by $S(p)$, $S(p, q)$, $S(p, q, r)$, and   $S'(p, q, r)$ the trees  $T_1(u; p)$, $T_2(u, v; p, q)$, $T_3(u, v, w; p, q, r)$, and  $T_4(u, v, w; p, q, r)$, respectively.
\end{deff}
\begin{figure}[H]
\centering
\includegraphics[width=0.75\textwidth]{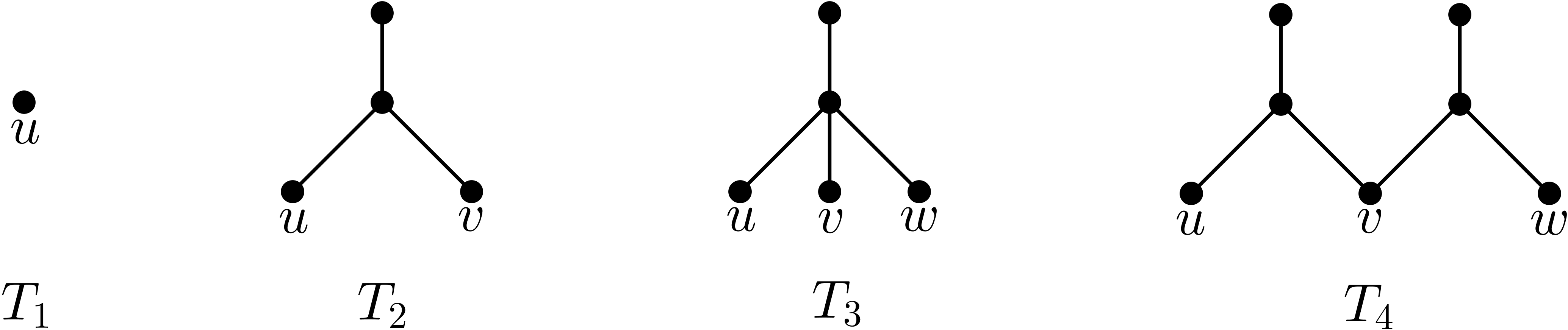}
\caption{\small{The trees of Deffnition \ref{defs}.}}\label{def}
\end{figure}

Note that all    trees introduced in  Definition \ref{defs} are of the form described in Definition \ref{stype}.
In the rest of the article, we will  use frequently the  next lemma which is proved in \cite[Lemma  2.8]{csi}.

\begin{lem}\label{cskvarithm}
Let $G$ be a  bipartite graph  with   bipartition $\{X, Y\}$ and  with $k$ positive eigenvalues. Let $G'$ be the graph  obtained from $G$ by joining $r$ new vertices of degree $1$ to each vertex  of $X$, for some positive integer $r$. Then $\lambda^2_i(G')=\lambda^2_i(G)+r$ for $i=1, \ldots, k$.
\end{lem}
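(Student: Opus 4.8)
The plan is to exploit the bipartite structure by passing to the biadjacency matrix and its Gram matrix. Writing $p=|V_1|$ and $q=|V_2|$ and ordering the vertices so that those of $V_1$ come first, we have
$$\EuScript{A}(G)=\begin{pmatrix} 0 & B\\ B^{\mathsf{T}} & 0\end{pmatrix},$$
where $B$ is the $p\times q$ biadjacency matrix recording the edges between $V_1$ and $V_2$. Because $G$ is bipartite its spectrum is symmetric about $0$, and the positive eigenvalues of $G$ are exactly the nonzero singular values of $B$; equivalently, $\lambda_1^2(G),\ldots,\lambda_k^2(G)$ are precisely the positive eigenvalues of the positive semidefinite matrix $BB^{\mathsf{T}}$ (here $k=\operatorname{rank} B$). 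This is the standard reduction I would invoke first, as it converts the assertion into a statement about the eigenvalues of $BB^{\mathsf{T}}$.

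Next I would describe $G'$ in the same language. Each vertex of $V_1$ acquires $r$ new pendant neighbours, and these new vertices all lie on the side of $V_2$, so $G'$ is bipartite with parts $V_1$ and $V_2\cup W$, where $|W|=rp$. Its biadjacency matrix therefore has the block form $B'=(\,B\mid C\,)$, where the $p\times rp$ matrix $C$ records the edges from $V_1$ to $W$. The crucial combinatorial observation is that the pendant sets attached to distinct vertices of $V_1$ are disjoint, so the rows of $C$ have pairwise disjoint supports, each of size $r$; hence
$$CC^{\mathsf{T}}=rI_p.$$

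The heart of the argument is then the identity
$$B'(B')^{\mathsf{T}}=BB^{\mathsf{T}}+CC^{\mathsf{T}}=BB^{\mathsf{T}}+rI_p.$$
Adding a scalar multiple of the identity shifts every eigenvalue of $BB^{\mathsf{T}}$ by exactly $r$ while fixing the eigenvectors, and in particular preserves the decreasing order of the eigenvalues. Since the positive eigenvalues of $G'$ are the square roots of the eigenvalues of $B'(B')^{\mathsf{T}}$, and $x\mapsto\sqrt{x}$ is increasing, the $i$-th largest positive eigenvalue of $G'$ equals $\sqrt{\lambda_i^2(G)+r}$ for each $i=1,\ldots,k$. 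Squaring yields $\lambda_i^2(G')=\lambda_i^2(G)+r$, as claimed.

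I expect no genuine obstacle in this route; the single point that needs care is the bookkeeping guaranteeing $CC^{\mathsf{T}}=rI_p$, namely that the new pendant vertices really are distinct across the vertices of $V_1$ so that the rows of $C$ are orthogonal of common squared norm $r$. Once that is secured, the shift-by-$rI_p$ identity does all the work, and the matching of indices follows automatically from the monotonicity of both the eigenvalue shift and the square-root map.
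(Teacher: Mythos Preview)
Your argument is correct. The key identity $B'(B')^{\mathsf{T}}=BB^{\mathsf{T}}+rI_p$, obtained from $CC^{\mathsf{T}}=rI_p$, cleanly shifts all eigenvalues of $BB^{\mathsf{T}}$ by $r$, and the standard correspondence between the nonzero eigenvalues of a bipartite graph and the nonzero eigenvalues of $BB^{\mathsf{T}}$ then gives $\lambda_i^2(G')=\lambda_i^2(G)+r$ for $i=1,\ldots,k$. The only cosmetic point is that when $r>0$ the graph $G'$ actually has $p=|V_1|$ positive eigenvalues (the extra $p-k$ all equal to $\sqrt{r}$), but the lemma only asserts the relation for the top $k$, and your proof delivers exactly that.

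As for comparison: the paper does not supply a proof of this lemma at all; it is quoted from Csikv\'ari's paper \cite{csi}. Your biadjacency-matrix route is in fact the natural (and essentially the original) way to see the statement, so there is nothing to contrast.
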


The following lemma is established  in   \cite[Proposition 5.1.1(i)]{bh} for $k=1$. The general case is straightforwardly proved by induction on $k$ as mentioned in   \cite[p.  90]{bh}.

\begin{lem}\label{join}
Let   $T_1$ and   $T_2$ be two vertex disjoint   trees with  specified vertices   $v_1\in V(T_1)$ and  $v_2\in V(T_2)$.   For a positive integer  $k$, assume  that  $T$  is the tree  obtained  from
$T_1$ and $k$ copies of $T_2$  by joining  $v_1$ to the  $k$ copies of $v_2$. Then
$$\varphi(T)=\varphi(T_2)^{k-1}\big(\varphi(T_1)\varphi(T_2)-k\varphi(T_1-v_1)\varphi(T_2-v_2)\big).$$
\end{lem}

Using Lemma \ref{join}, one obtains  that
\begin{equation}\label{sp}\varphi\big(S(p)\big)=x(x^2-p-1)(x^2-1)^{p-1},
\end{equation}
for any nonnegative integer  $p$.

\begin{lem}\label{v1v2vk}
Let $T$ be a tree,   $k, t$ be positive integers,   and  $v_1, \ldots, v_k$ be distinct  vertices of $T$.
Suppose  that  there exists  a polynomial $f(x)$ such that   for  every  integers $s_1, \ldots, s_k\geqslant t$,  the tree  $T'=T(v_1, \ldots, v_k; s_1, \ldots, s_k)$ satisfies
\begin{equation}\label{ass}
\varphi(T')=(x^2-1)^{s_1+\cdots+s_k-k}f(x)\prod_{i=1}^k\big(x^2-\alpha_i(s_1, \ldots, s_k)\big),
\end{equation}
where  $\alpha_i(s_1, \ldots, s_k),$  $\ldots,$  $\alpha_k(s_1, \ldots, s_k)$  are  positive-valued functions in terms  of $s_1, \ldots, s_k$.
Then $T=\mathcal{S}(R; \{v_1, \ldots, v_k\})$  for some tree $R$.
\end{lem}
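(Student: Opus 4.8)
The plan is to reconstruct $T$ from \eqref{ass}, read as a family of identities in $x$ indexed by the integers $s_1,\ldots,s_k\ge t$. First I would make the left-hand side explicit. Applying \eqref{join} with the second tree equal to $P_2$ (so that $\varphi(P_2)=x^2-1$ and $\varphi(P_2-v_2)=x$), once at each attachment vertex and iterating, gives
$$\varphi(T')=(x^2-1)^{s_1+\cdots+s_k-k}\sum_{I\subseteq\{1,\ldots,k\}}(-x)^{|I|}(x^2-1)^{k-|I|}\,\varphi\big(T-\{v_i:i\in I\}\big)\prod_{i\in I}s_i.$$
Writing $R(x;s)$ for the inner sum, $R$ is multilinear in $s_1,\ldots,s_k$, and comparison with \eqref{ass} yields the master identity $R(x;s)=f(x)\prod_{i=1}^k\big(x^2-\alpha_i(s)\big)$.

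Next I would extract structural data by comparing coefficients of the $s$-monomials. Put $T_0=T-\{v_1,\ldots,v_k\}$ and $T_j=T-\{v_i:i\ne j\}$. The coefficient of $s_1\cdots s_k$ shows $f(x)$ is proportional to $x^{k}\varphi(T_0)$, and since both are monic of degree $|\mathpzc{V}(T)|$ they are equal; dividing this into the coefficient of $\prod_{i\ne j}s_i$ then gives, for each $j$, the one-vertex identity
$$(x^2-1)\,\varphi(T_j)=x\,(x^2-\delta_j)\,\varphi(T_0),$$
where $\delta_j$ is a constant.

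The heart of the argument is to turn these identities into local graph structure. Since $T_0=T_j-v_j$, the deletion formula $\varphi(T_j)=x\varphi(T_0)-\sum_{u\sim v_j}\varphi(T_0-u)$ together with the factorisation of $\varphi(T_0)$ over the branches of $T_j$ at $v_j$ gives $\varphi(T_j)/\varphi(T_0)=x-\sum_{B}\varphi(B-a_B)/\varphi(B)$, where $B$ runs over the components of $T_0$ adjacent to $v_j$ and $a_B$ is the neighbour of $v_j$ in $B$. Combined with the one-vertex identity this reads
$$\sum_{B}\frac{\varphi(B-a_B)}{\varphi(B)}=\frac{(\delta_j-1)\,x}{x^2-1},$$
whose only poles are $\pm1$. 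Each summand is a diagonal entry of the resolvent of the symmetric matrix $\EuScript{A}(B)$, hence a function whose poles are eigenvalues of $B$ with nonnegative residues; in particular it has a genuine pole at $\lambda_1(B)$. No cancellation being possible, every branch satisfies $\lambda_1(B)=1$, and since $\lambda_1\ge\sqrt{\Delta}$ while $\lambda_1(P_1)=0$, each $B$ must be a single edge. Thus, once the other attachment vertices are deleted, $v_j$ carries only pendant $P_2$'s; running over all $j$ and using connectedness, $T_0$ is a disjoint union of $P_2$'s, each joined to one or more of the $v_j$.

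It then remains to assemble these pieces. I would use acyclicity to forbid a $P_2$ joined twice to the same $v_j$, and the two-vertex identities (coefficients of $\prod_{i\ne j,\ell}s_i$) to record exactly which pairs $v_j,v_\ell$ share a common $P_2$; feeding this adjacency data back into the requirement that \eqref{ass} carry precisely $k$ quadratic factors with positive $\alpha_i$ (cf.\ Theorem \ref{int}, Lemma \ref{cskvarithm} and \eqref{weyl}) should force the $v_j$ to be strung along a single path by ``connector'' $P_2$'s carrying a pendant leaf, with an extra pendant $P_2$ at each end, that is $T=S(r_1,\ldots,r_k)$ with the $v_j$ central. The main obstacle is precisely this final step: the local analysis pins down $T_0$ as a union of $P_2$'s but does not by itself stop a single $P_2$ from being adjacent to three or more of the $v_j$, which would produce a branched rather than path-like arrangement of the central vertices; excluding such configurations, and thereby fixing the global shape, is the delicate point where the hypothesis that \eqref{ass} holds simultaneously for all $s\ge t$ must be used in full.
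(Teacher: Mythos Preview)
Your approach is genuinely different from the paper's. The paper proceeds by induction on $k$: it isolates the dependence on $s_k$ alone, subtracts the identities at $s_k=t$ and $s_k=t+1$ to obtain a relation of the same shape for $T-v_k$ with $k-1$ attachment vertices, and then invokes the inductive hypothesis to conclude that every component of $T-v_k$ containing some $v_i$ is already an $S(\ldots)$. Varying which vertex plays the role of $v_k$ then pins down $T$ directly for $k\ge3$, and a short perfect-matching argument (via Corollary~\ref{pm}) handles $k=2$. Your multilinear expansion of $R(x;s)$, the identification $f(x)=x^k\varphi(T_0)$ from the top $s$-coefficient, and the resolvent argument turning $(x^2-1)\varphi(T_j)=x(x^2-\delta_j)\varphi(T_0)$ into ``every component of $T_0$ is a $P_2$'' are all correct and rather elegant; this is a different and self-contained route to the local structure.

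The gap is exactly where you locate it, and it is not a detail. Knowing only that $T_0$ is a disjoint union of $P_2$'s and that each $v_j$ sees only $P_2$'s in $T_j$ does not exclude: (i) two of the $v_j$ being adjacent in $T$; (ii) a $P_2=\{a,b\}$ with $a$ adjacent to some $v_i$ and $b$ adjacent to some $v_j$; or (iii) one endpoint of a $P_2$ adjacent to three or more of the $v_j$, producing a branched rather than path-like arrangement of the central vertices. Each of these is consistent with all your one-vertex identities yet yields a tree that is not $S(r_1,\ldots,r_k)$. Your suggestion to use the two-vertex identities together with the positivity of the $\alpha_i$ is only a plan: you have not shown that the degree-$4$ polynomials $Q_{\{1,\ldots,k\}\setminus\{j,\ell\}}$ actually eliminate these configurations, and it is far from clear that they do without effectively rebuilding an inductive argument.

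This is precisely the difficulty the paper's induction sidesteps: the inductive hypothesis returns not merely ``union of $P_2$'s'' but the full global form $S(\ldots)$ on $T-v_k$, so the assembly step is almost free. If you want to salvage your approach, the cleanest fix is to graft that induction onto what you already have rather than to push the coefficient comparison to higher layers.
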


\begin{proof}{
We prove the assertion by induction on $k$. First assume that $k=1$. For convenience in notation, let   $v=v_1$, $s=s_1$,    and $\alpha=\alpha_1$.
We have $\varphi(T')=(x^2-1)^{s-1}((x^2-1)\varphi(T)-sx\varphi(T-v))$ by   Lemma  \ref{join}. Hence, we deduce from \eqref{ass}   that  $(x^2-1)\varphi(T)-sx\varphi(T-v)=f(x)(x^2-\alpha(s))$  for any integer $s\geqslant t$. In particular,
\begin{equation}\label{11}
(x^2-1)\varphi(T)-tx\varphi(T-v)=f(x)\big(x^2-\alpha(t)\big)
\end{equation}
and
\begin{equation}\label{12}
(x^2-1)\varphi(T)-(t+1)x\varphi(T-v)=f(x)\big(x^2-\alpha(t+1)\big).
\end{equation}
Using  \eqref{11} and \eqref{12}, one  obtains  that $f(x)=x\varphi(T-v)/(\alpha(t+1)-\alpha(t))$. It is clear from  \eqref{ass} that $f(x)$ is a monic polynomial,  implying  $\alpha(t+1)-\alpha(t)=1$. Therefore,     $f(x)=x\varphi(T-v)$.  It follows from \eqref{11} that $(x^2-1)\varphi(T)=x(x^2-\mu)\varphi(T-v)$  for some real number  $\mu$. Hence,  ${\rm mult}(T; 0)={\rm mult}(T-v;  0)+1$  and so it follows from    Lemma \ref{=}   that  $v$ is not adjacent to a vertex of degree $1$ in $T$. Consequently, $T$ contains $S(r)$ as an induced subgraph with the central vertex $v$,  where  $r$ is the degree of $v$. We know that the sum of squares of all eigenvalues of a graph equals twice the number of edges of the graph \cite[Proposition 1.3.1]{bh}. Applying  this fact to $T$ and $T-v$, we obtain that   $r=\mu-1$. This means that
$\lambda_1(T)=\lambda_1(S(r))$ and so   Theorem \ref{int} yields that  $T=S(r)$,  as desired.

Now assume that $k\geqslant2$. Let  $T''=T(v_1, \ldots, v_{k-1}; s_1, \ldots, s_{k-1})$. By  Lemma  \ref{join}, we have \begin{equation}\label{extra}\varphi(T')=(x^2-1)^{s_k-1}\big((x^2-1)\varphi(T'')-s_kx\varphi(T''-v_k)\big)
\end{equation}
Combining  \eqref{ass} with \eqref{extra} and   setting  $\rho=s_1+\cdots+s_{k-1}-k+1$, we conclude  that
$$(x^2-1)\varphi(T'')-s_kx\varphi(T''-v_k)=(x^2-1)^{\rho}f(x)\prod_{i=1}^k\big(x^2-\alpha_i(s_1, \ldots, s_k)\big),$$  for every  integers $s_1, \ldots, s_k\geqslant t$.
In particular, we have
\begin{equation}\label{k1}
(x^2-1)\varphi(T'')-tx\varphi(T''-v_k)=(x^2-1)^{\rho}f(x)\prod_{i=1}^k\big(x^2-\alpha_i(s_1, \ldots, s_{k-1},  t)\big)
\end{equation}
and
\begin{equation}\label{k2}
(x^2-1)\varphi(T'')-(t+1)x\varphi(T''-v_k)=(x^2-1)^{\rho}f(x)\prod_{i=1}^k\big(x^2-\alpha_i(s_1, \ldots, s_{k-1},  t+1)\big),
\end{equation}
for every  integers $s_1, \ldots, s_{k-1}\geqslant t$.
It is easily obtained from \eqref{k1} and \eqref{k2}    that
\begin{eqnarray}\label{es-1}
x\varphi(T''-v_k)=(x^2-1)^{\rho}f(x)\prod_{i=1}^{k-1}\big(x^2-\beta_i(s_1, \ldots, s_{k-1})\big),
\end{eqnarray}
where  $\beta_i(s_1, \ldots, s_{k-1})$, $\ldots$, $\beta_{k-1}(s_1, \ldots, s_{k-1})$ are   positive-valued function in terms  of $s_1, \ldots, s_{k-1}$.
It follows from  Remark \ref{weyl} that    $k-1$ of the roots of $\varphi(T''-v_k)$ tend to infinity as $s_1,\ldots,s_{k-1}$ grow and hence
$\prod_{i=1}^{k-1}(x^2-\beta_i(s_1, \ldots, s_{k-1}))$ is not divisible by $x$  for some integers      $s_1, \ldots, s_{k-1}$. So,   we find  from  \eqref{es-1}  that    $f(x)=xg(x)$ for some polynomial $g(x)$  and thus  we can rewrite \eqref{es-1} as
\begin{eqnarray}\label{es}
\varphi(T''-v_k)=(x^2-1)^{\rho}g(x)\prod_{i=1}^{k-1}\big(x^2-\beta_i(s_1, \ldots, s_{k-1})\big).
\end{eqnarray}

Let $W=\{v_1, \ldots, v_k\}$. By    \eqref{es},   Lemma \ref{delp2}, and  the induction hypothesis, we     deduce  that  each  connected component
$H$ of $T-v_k$  with $V(H)\cap W\neq\varnothing$  is of the form $\mathcal{S}(R; V(H)\cap W)$.
By replacing $v_k$  with any   of  $v_1, \ldots, v_{k-1}$, we find that this  property also  holds for    $T-v_1, \ldots, T-v_{k-1}$.
From this,  we  conclude   that  each  connected component
$H$ of $T-v_k$  with $V(H)\cap W=\varnothing$  must be  $P_2$, since if not, the connected component $H$ of  $T-v_i$   containing  $v_k$  does not have the form $\mathcal{S}(R; V(H)\cap W)$ for any $i\neq k$, a contradiction.

Denote   by  $L_1=\mathcal{S}(F_1;  A_1), \ldots, L_\ell=\mathcal{S}(F_\ell;  A_\ell)$    the    connected components  of $T-v_k$  which are not $P_2$.   In order to complete the proof, it is  clearly enough   to show   that the neighbor of $v_k$ in $V(L_i)$ is contained in $B_i=V(F_i)\setminus A_i$  for   $i=1,  \ldots, \ell$.
If $k=2$, then  $f(0)=0$ and  \eqref{ass} imply  that   $T'$ has eigenvalue $0$ and so  Corollary \ref{pm} yields   that  $T'$ and  $T$ have  no perfect matching. This     forces  that  the neighbor of $v_2$ in $V(L_1)$ to  be  contained in $B_1$. If  $k\geqslant3$ and  the neighbor of $v_k$ in $V(L_i)$ is not   contained in $B_i$ for  some $i$, then   the    connected component $H$ of $T-v_j$ containing   $v_k$  does not have the form $\mathcal{S}(R; V(H)\cap W)$ for any $j$ with $A_i\neq\{v_j\}$. This   completes the proof.
}\end{proof}

The following lemma is a special case of
\cite[Theorem 8.1.7]{crs}.

\begin{lem}\label{crslem}
Let    $e$ be  an edge of a tree  $T$.
Let $T'$ be the tree obtained from $T$ by contracting $e$ to a vertex $u$ and attaching  a pendant  vertex to $u$. Then
$\lambda_1(T')\geqslant\lambda_1(T)$.
\end{lem}

\begin{lem}\label{d+m+1}
Let $T$ be a tree of order $n$ and   $v\in V(T)$ be of  the degree $k$. For any positive integer $m$, define  $T_v(m)$ as  the tree obtained from $T$ by attaching  $m$ pendant vertices to $v$. Then
$\lambda_1^2(T_v(m))<m+k+1$ if   $m>(k+1)(n-k-2)$.
\end{lem}

\begin{proof}{
By applying  the operation described in Lemma \ref{crslem} iteratively on all the edges  of $T_v(m)$ not incident with  $v$, we reach at a   tree $T'_v(m)$ indicated in Figure \ref{af}.
\begin{figure}[H]
\centering
\includegraphics[width=0.5\textwidth]{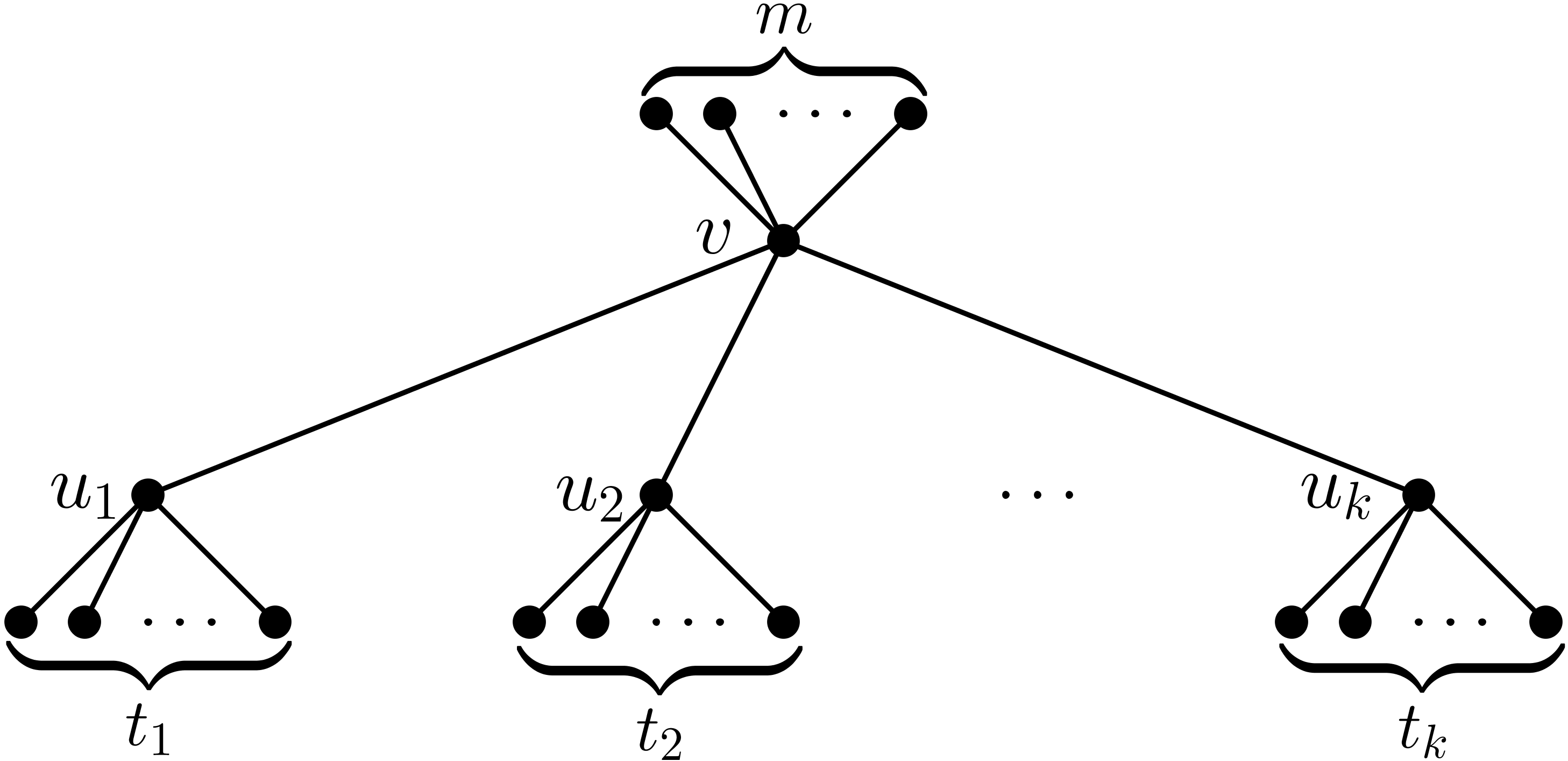}
\caption{\small{The tree $T'_v(m)$.}}\label{af}
\end{figure}
\noindent It follows from  Lemma \ref{crslem} and Theorem \ref{int} that  $\lambda_1(T_v(m))\leqslant\lambda_1(T'_v(m))\leqslant\lambda_1(T''_v(m))$,  where
$T''_v(m)$ is the tree  obtained for $T'_v(m)$ by increasing the number of pendant vertices attached
to each of $u_1, \ldots, u_k$ to $t=\max\{t_1, \ldots, t_k\}$.
The characteristic polynomial of $T''_v(m)$ can be computed by  applying  Lemma \ref{join}. So, an easy calculation shows that
$$\lambda_1^2\big(T''_v(m)\big)=\frac{m+k+t+\sqrt{(m+k+t)^2-4mt}}{2}.$$  Hence,  $\lambda_1^2(T_v(m))<m+k+1$  if $m>(k+1)(t-1)$. Since
$n\geqslant  k+t+1$, the  result   follows.
}\end{proof}

Now we are in a position to present  our main result.

\begin{thm}\label{finiteint}
For every integer $h\geqslant2$, there are  finitely many integral   trees with nullity  $h$.
\end{thm}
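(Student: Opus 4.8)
The plan is to argue by contradiction, compressing an infinite family of integral trees of nullity $h$ into a single family to which Lemma~\ref{v1v2vk} applies, and then invoking Theorem~\ref{inttr}. So suppose there are infinitely many integral trees of nullity $h\ge 2$. For an integral tree the only integer lying in $(-1,1)$ is $0$, hence for each such tree $T$ we have $m(T)=\mathrm{mult}(T;0)=h$. Write $T=R(v_1,\dots,v_k;s_1,\dots,s_k)$, where $R$ is the reduced tree obtained from $T$ by deleting all pendant $P_2$'s, the vertices $v_1,\dots,v_k\in\mathpzc{V}(R)$ are those carrying these pendant $P_2$'s, and $s_i\ge 1$. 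By Lemma~\ref{delp2asl}, $m(R)\le m(T)=h$, so Theorem~\ref{finitered} leaves only finitely many possibilities for $R$; for each fixed $R$ there are also only finitely many possible attachment sets $\{v_1,\dots,v_k\}\subseteq\mathpzc{V}(R)$. Thus infinitely many of our trees share one fixed $R$ and one fixed attachment set and differ only in the multiplicity vector $s=(s_1,\dots,s_k)$, which ranges over an infinite set $\mathcal{S}\subseteq\mathbb{Z}_{\ge 1}^{k}$. Passing to a subsequence, I may assume that some coordinates are constant along $\mathcal{S}$ while the rest tend to infinity; absorbing the constant pendant $P_2$'s into the base and relabelling, I reduce to the case where $T_0$ is a fixed tree, $v_1,\dots,v_k$ are fixed vertices with $k\ge 1$, and $T^{(j)}=T_0(v_1,\dots,v_k;s^{(j)})$ is integral with every coordinate of $s^{(j)}$ tending to infinity.

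Next I would study the spectrum of this family. Iterating the join formula (\ref{join}) with the second tree equal to $P_2$ gives $\varphi(T^{(j)})=(x^2-1)^{\,s_1^{(j)}+\cdots+s_k^{(j)}-k}\,Q\big(x;s^{(j)}\big)$, where $Q(x;s)\in\mathbb{Z}[s_1,\dots,s_k][x]$ is a \emph{fixed} polynomial, monic of degree $|\mathpzc{V}(T_0)|+2k$ in $x$ and affine–multilinear in $s$ (the entire $s$-dependence of the exponent sits in the $(x^2-1)$-factor). Deleting $v_1,\dots,v_k$ from $T^{(j)}$ leaves only copies of $P_2$ together with fixed pieces of $T_0$, so all eigenvalues of $T^{(j)}-\{v_1,\dots,v_k\}$ stay bounded; by interlacing (Theorem~\ref{int}) every eigenvalue of $T^{(j)}$ except the top $k$ remains bounded, while (\ref{weyl}) forces the top $k$ to grow, and by bipartiteness the same holds symmetrically at the bottom. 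Hence $Q(x;s^{(j)})$ has exactly $k$ positive growing roots, their negatives, and $|\mathpzc{V}(T_0)|$ bounded roots, all integers. The bounded roots lie in a fixed finite set, so by the pigeonhole principle there is an infinite $\mathcal{S}'\subseteq\mathcal{S}$ along which the bounded-root multiset is constant; writing $f(x)$ for the associated monic polynomial, I obtain, for every $s\in\mathcal{S}'$, the factorization $Q(x;s)=f(x)\prod_{i=1}^{k}\big(x^2-\alpha_i(s)\big)$ with each $\alpha_i(s)>0$.

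The crux — and the step I expect to be the main obstacle — is to promote this factorization, so far known only for $s\in\mathcal{S}'$, to the identity (\ref{ass}) valid for \emph{all} integers $s_1,\dots,s_k\ge t$, as Lemma~\ref{v1v2vk} demands. Dividing the fixed polynomial $Q$ by the fixed monic $f$ in $\mathbb{Z}[s][x]$ produces a remainder whose $x$-coefficients are polynomials in $s$ that vanish on $\mathcal{S}'$; if no nonzero polynomial in $s$ vanishes on $\mathcal{S}'$, the remainder is identically zero, $f$ divides $Q$, and the even monic quotient of degree $2k$ furnishes the product $\prod_i\big(x^2-\alpha_i(s)\big)$ for every $s$. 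I expect to secure this density from the asymptotics $\alpha_i(s)\sim s_i$, which make the growing eigenvalues asymptotically independent and so let them be driven to integer values essentially coordinatewise, forcing $\mathcal{S}$ to contain a product of infinite sets; failing a clean density argument, an induction on $k$ mirroring the inductive step of Lemma~\ref{v1v2vk} should suffice. For $k=1$ this step is elementary: two values of $s$ in $\mathcal{S}'$ already yield $(x^2-1)\varphi(T_0)=x(x^2-\mu)\varphi(T_0-v_1)$, exactly as in the first part of the proof of Lemma~\ref{v1v2vk}.

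Once (\ref{ass}) holds identically, Lemma~\ref{v1v2vk} forces $T_0=S(r_1,\dots,r_k)$ with $v_1,\dots,v_k$ as its central vertices. Since adding a pendant $P_2$ at a central vertex of an $S$-tree merely increments one of its parameters, every member of the family is again of that form, namely $T^{(j)}=S\big(r_1+s_1^{(j)},\dots,r_k+s_k^{(j)}\big)$. If $k\ge 2$ this contradicts Theorem~\ref{inttr}, which asserts that no such tree is integral. If $k=1$, then each $T^{(j)}$ is a subdivided star; removing its pendant $P_2$'s one at a time and applying Lemma~\ref{=} shows its nullity equals that of $P_1$, namely $1$, contradicting $h\ge 2$. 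Finally $k=0$ cannot occur, since $\mathcal{S}$ is infinite. In every case we reach a contradiction, and therefore there are only finitely many integral trees with nullity $h$.
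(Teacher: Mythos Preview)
Your overall architecture matches the paper's: reduce to a fixed base tree with $k$ attachment vertices, show that the characteristic polynomial of $T(v_1,\dots,v_k;s_1,\dots,s_k)$ has the shape required by Lemma~\ref{v1v2vk}, and then invoke Theorem~\ref{inttr}. The difference lies entirely in the step you yourself flag as the crux, and there your proposed argument has a real gap.

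Your plan is to divide $Q(x;s)$ by the fixed monic $f(x)$ and argue that the remainder, whose coefficients are polynomials in $s$, vanishes on the infinite set $\mathcal S'$ and hence identically. But an infinite subset of $\mathbb Z^k$ with every coordinate unbounded need not be Zariski dense; for instance $\{(n,n,\dots,n):n\in\mathbb N\}$ lies in the hyperplane $s_1=s_2$. Nothing in your setup rules this out: after passing to subsequences your $\mathcal S'$ could well be a curve. The heuristic ``$\alpha_i(s)\sim s_i$, so integrality can be achieved coordinatewise, so $\mathcal S$ contains a product of infinite sets'' is exactly the statement you are trying to prove and is not otherwise justified; and the fallback ``induction on $k$ mirroring Lemma~\ref{v1v2vk}'' cannot start, because that lemma's inductive step already presupposes (\ref{ass}) for all $s\geqslant t$.

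The paper closes this gap without any density argument, using Lemma~\ref{delp2} together with interlacing. Once a single $T_{i_0}$ is fixed, Lemma~\ref{delp2} says that attaching further pendant $P_2$'s at the $v_j$ leaves $m(\,\cdot\,)$ unchanged and increases $\text{\sl mult}(\,\cdot\,;1)$ by one each time; hence for \emph{every} $s\geqslant t$ the tree $T'=T(v_1,\dots,v_k;s)$ has nullity $h$ and the correct number of eigenvalues equal to $\pm1$. For the remaining bounded eigenvalues one sandwiches $T'$ between $T_{i_0}$ and a sufficiently large $T_i$ (possible since each $s_{ij}$ is strictly increasing) and applies Theorem~\ref{int} to both inclusions, obtaining $\lambda_j(T_{i_0})\leqslant\lambda_j(T')\leqslant\lambda_j(T_i)$ for $j>k$; since the two ends agree for $i\geqslant i_0$, so does the middle. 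This pins down every eigenvalue of $T'$ except the top and bottom $k$, giving (\ref{ass}) for all $s_1,\dots,s_k\geqslant t$ with a fixed $f$. Your $k=1$ endgame is fine and in fact slightly more explicit than the paper's.
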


\begin{proof}{
Arguing toward a contradiction,  suppose that there are infinitely many  integral trees with nullity $h$ for some $h\geqslant2$.
By Theorem \ref{finitered}, there is a  tree $T$ with  $V(T)=\{v_1, \ldots, v_n\}$  such that
$T(v_1, \ldots, v_n; s_{i1}, \ldots, s_{in})$ is integral for an infinite set  $\{(s_{i1}, \ldots, s_{in})\}_{i\in \mathbb{N}}$ of $n$-tuples of nonnegative integers.
If for some fixed integers   $j$ and $s$, the set     $\{i\, |\, s_{ij}=s\}$ is infinite, then we replace $T$ by $T(v_j; s)$.
Repeating this  operation,  we may assume that there is a tree $T$ of order $n$  with specified  vertices $v_1, \ldots, v_k$
and an infinite set  $\{(s_{i1}, \ldots, s_{ik})\}_{i \, \in \,  \mathbb{N}}$ of $k$-tuples of nonnegative integers such that
$s_{ij}<s_{(i+1)j}$   for  $j=1, \ldots, k$,  and
$T_i=T(v_1, \ldots, v_k; s_{i1}, \ldots, s_{ik})$ is integral for all $i$.

By Remark \ref{weyl}, the set  $\{\lambda_j(T_i)\, |\, i\in \mathbb{N}\}$ is  not bounded for $j=1, \ldots,   k$,  and    by     Theorem \ref{int},  the set $\{\lambda_{k+1}(T_i)\, |\, i\in \mathbb{N}\}$ is  bounded above by  $\max\{1,\lambda_1(T-\{v_1, \ldots, v_k\})\}$. This clearly implies that there exists an integer $i_0$ such that $\lambda_j(T_i)$ is fixed for
$j=k+1, \ldots, k+\tfrac{n-h}{2}$ and  each   $i\geqslant i_0$.  Furthermore, by Lemma \ref{delp2}, we have  $\lambda_j(T_i)=1$ for $j=k+\tfrac{n-h}{2}+1, \ldots, s_{i1}+\cdots+s_{ik}+\tfrac{n-h}{2}$ and  all  $i\geqslant i_0$. By Theorem \ref{int}, it is not hard to see that
$T'=T(v_1, \ldots, v_k; s_1, \ldots, s_k)$ satisfies in \eqref{ass} for all integers $s_1, \ldots, s_k\geqslant t$, where $t=\max\{s_{i_01}, \ldots, s_{i_0k}\}$.
Therefore, it follows from  Lemma \ref{v1v2vk} that   $T$
has the form $\mathcal{S}(R; \{v_1, \ldots, v_k\})$ for some tree $R$.

We proceed to obtain a contradiction by showing  that  for large enough $i$, $\lambda_1(T_i)$ is not an integer.
For a fixed $i$,  we may relabel $v_1, \ldots, v_k$ such that   $s_{i1}\geqslant\cdots\geqslant s_{ik}$.
Using   Lemma \ref{cskvarithm}  twice and  by   Theorem \ref{int},  we find  that $\lambda_1^2(T_i)\leqslant 1+s_{i1}+\lambda_1^2(R)$.
Since $T_i$ contains     vertex disjoint copies of $S(s_{i1})$ and $S(s_{i2})$,        Theorem \ref{int} and  \eqref{sp}  imply  that  $\lambda_2^2(T_i)\geqslant \lambda_1^2(S(s_{i2}))=1+s_{i2}$. It follows that $\lambda_1^2(T_i)-\lambda_2^2(T_i)-\lambda_1^2(R)\leqslant  s_{i1}-s_{i2}$.
Since  $\lambda_1^2(T_i)-\lambda_2^2(T_i)$ is the difference of two distinct perfect squares,  $\lambda_1^2(T_i)-\lambda_2^2(T_i)$ and so  $ s_{i1}- s_{i2}$ tend to infinity    when   $i$ grows.
Further,      using   Lemma \ref{cskvarithm}  twice and   by  Theorem \ref{int}, we find that
\begin{equation}\label{inf1}
\lambda_1^2(T_i)\leqslant  1+s_{i2}+\lambda_1^2\big(R_{v_1}(s_{i1}-s_{i2})\big),
\end{equation}
where $R_{v_1}(s_{i1}-s_{i2})$ is as of Lemma \ref{d+m+1}.
Employing   Lemma \ref{d+m+1} and assuming $i$  is large enough, we obtain that
\begin{equation}\label{inf2}
\lambda_1^2\big(R_{v_1}(s_{i1}-s_{i2})\big)<s_{i1}-s_{i2}+\ell+1,
\end{equation}
where $\ell$ is  the degree of $v_1$ in  $R$.
Clearly, it follows from   $h\geqslant2$ and       Lemma \ref{=}  that   $k\geqslant2$.  From this and by    Theorem \ref{int}  and \eqref{sp}, one deduces that
\begin{equation}\label{inf3} \lambda_1^2(T_i)>\lambda_1^2\big(S(s_{i1}+\ell)\big)=s_{i1}+\ell+1.
\end{equation}
It follows from
\eqref{inf1}--\eqref{inf3} that $s_{i1}+\ell+1<\lambda_1^2(T_i)<s_{i1}+\ell+2$  for large enough
$i$.  This  contradiction  completes the proof.
}\end{proof}

\section{Integral trees with nullity $\mathbf{2}$ and $\mathbf{3}$}

Integral trees with nullity $0$ and $1$ are respectively  classified in   \cite{bro} and  \cite{wata0}.
In this  section  we characterize integral trees with nullity $2$ and $3$.
Before that,  we determine  all  integral trees among   the   trees introduced in Definition \ref{defs}.
From \eqref{sp}, we find  that $S(p)$ is integral if and only if $p+1$ is a perfect square.

\begin{thm}\label{inttr1}
Let $p$  and  $q$ be nonnegative integers. Then $S(p, q)$ is not integral.
\end{thm}

\begin{proof}{
Towards a contradiction,   suppose  that $T=S(p, q)$ is  integral.  We first assume  that  $p=q$.
Using   Lemma \ref{cskvarithm} twice, we find    that  $\lambda_1^2(T)=p+3$. Since $T$ has  two vertex disjoint copies of $S(p)$,  we obtain from  Theorem \ref{int} and \eqref{sp} that   $\lambda_2^2(T)\geqslant \lambda_1^2(S(p))=p+1$.  Therefore, $\lambda_1^2(T)-\lambda_2^2(T)\leqslant2$. This is a  contradiction,  since no   two distinct perfect squares have difference at most $2$.
We now  assume without loss of generality that $p>q$.
Again, using   Lemma \ref{cskvarithm}  twice and by     Theorem \ref{int}, we find that  $\lambda_1^2(T)<p+3$.
Since $T$ contains   a copy of $S(p+1)$ as a   subgraph,   Theorem \ref{int} and \eqref{sp} yield   that  $\lambda_1^2(T)>\lambda_1^2(S(p+1))=p+2$.
Hence, $p+2<\lambda_1^2(T)<p+3$ which implies that $\lambda_1(T)$ is not an integer, a contradiction.
}\end{proof}

\begin{thm}\label{inttr2}
Let $p$, $q$,      $r$ be nonnegative integers and let $T\in\{S(p, q, r), S'(p, q, r)\}$. Then either  $T=S(0, 0, 0)$ or  $T$ is not integral.
\end{thm}

\begin{proof}{
Assume   that $T$ is  integral and
let $t$ and $t'$ be the largest and  second largest number among $p, q, r$, respectively.   We know from \cite[Table 2]{cvet} that  $S(0, 0, 0)$ is    integral while $S'(0, 0, 0)$ is not integral. Hence, towards a contradiction,   we  suppose  that $t\geqslant1$.
Since $T$ contains  a copy of  $S(t+1)$ as a   subgraph,   Theorem \ref{int} and \eqref{sp}  imply  that   \begin{equation}\label{non1}
\lambda_1^2(T)>\lambda_1^2(S(t+1))=t+2.
\end{equation}
Assume that  $R$ is one of  the star graph of order  $4$ or $P_5$. Using Lemma \ref{cskvarithm} twice and by    Theorem \ref{int}, one  obtains  that  $\lambda_1^2(T)\leqslant 1+t+\lambda_1^2(R)$, where the equality occurs  if and only if  $p=q=r$.  From  \cite[pp. 8--9]{bro},    we find  that    $\lambda_1(R)=\sqrt{3}$ and so $\lambda_1^2(T)\leqslant t+4$.     In the case of equality, $T$ has three vertex disjoint copies of $S(t)$ and thus      Theorem \ref{int} and \eqref{sp}  yield  that  $\lambda_2^2(T)\geqslant\lambda_1^2(S(t))=t+1$ which in turn implies that  $\lambda_1^2(T)-\lambda_2^2(T)\leqslant3$. This   is impossible, since  $\lambda_1(T)$ and $\lambda_2(T)$ are two distinct  integers more than  $1$.
Thus,   in view of \eqref{non1}, one  deduces   that
$\lambda_1^2(T)=t+3$.
We know from \cite[Table 2]{cvet} that  $\lambda_1(S(1, 0, 0))$, $\lambda_1(S'(1, 0, 0))$, and $\lambda_1(S'(0, 1, 0))$  are greater than $2$.
This  implies  that    $t\geqslant2$ and therefore    $\lambda_1^2(T)-\lambda_2^2(T)\geqslant5$.     On the other hand, $T$ contains  two vertex disjoint copies of $S(t')$, so  Theorem \ref{int} and \eqref{sp} yield  that  $\lambda_2^2(T)\geqslant \lambda_1^2(S(t'))=t'+1$. This  follows that $t-t'\geqslant \lambda_1^2(T)-\lambda_2^2(T)-2\geqslant3$.
Assume that   $R'$ is one of the trees  $R'_1$, $R'_2$, $R'_3$    which are  depicted in Figure \ref{r}.
\begin{figure}[H]
\centering
\includegraphics[width=0.75\textwidth]{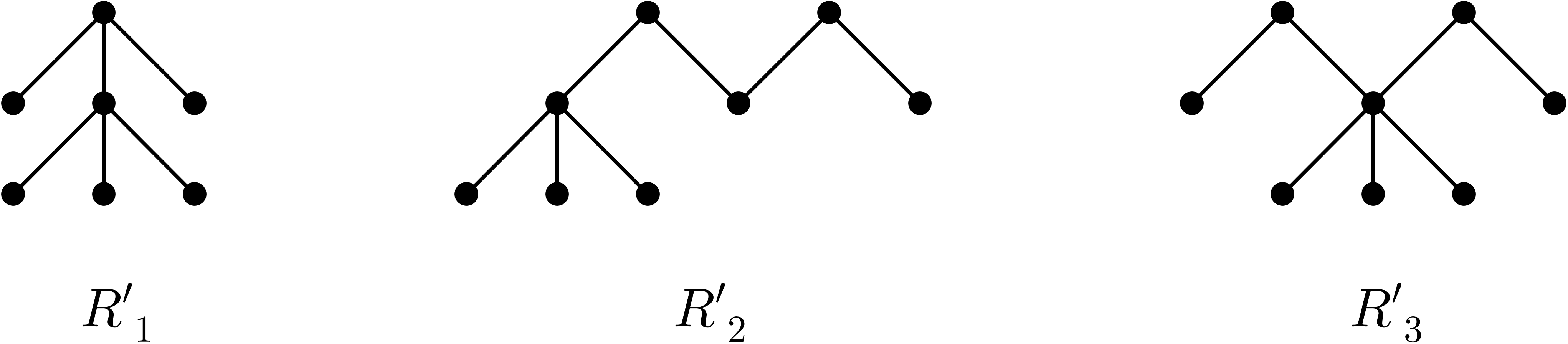}
\caption{\small{The tree $R'$.}}\label{r}
\end{figure}
\noindent By Theorem \ref{int} and using  Lemma \ref{cskvarithm} twice, one deduces  that  $t+3=\lambda_1^2(T)\leqslant 1+t-3+\lambda_1^2(R')$,  implying  $\lambda_1(R')\geqslant\sqrt{5}$. We know from \cite[Table 2]{cvet} that   $\lambda_1(R'_1)$ and $\lambda_1(R'_2)$    are less than $\sqrt{5}$. So,  $R'=R'_3$ and therefore  $T$ contains a copy of  $S(t+2)$  as   a   subgraph. Hence,  Theorem \ref{int} and \eqref{sp}  imply  that   $\lambda_1^2(T)>\lambda_1^2(S(t+2))=t+3$, a contradiction.
}\end{proof}

We are now ready to characterize integral trees with nullity $2$ and $3$. In order to do this in   a simple manner,   we use   the following interesting result  which is called the  Parter--Wiener theorem \cite{p, w}.

\begin{thm}   \label{parter} If $T$ is a tree and ${\rm mult}(T; \lambda)\geqslant2$ for some $\lambda$,  then there exists $v\in V(T)$   such that ${\rm mult}(T-v;  \lambda)={\rm mult}(T; \lambda)+1$.
\end{thm}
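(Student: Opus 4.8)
The plan is to locate a vertex $v$ whose deletion raises the multiplicity by exhibiting, inside one component of $T-v$, a $\lambda$-eigenvector that does \emph{not} vanish at the neighbor of $v$; I claim such a configuration is both available (precisely because $\text{{\sl mult}}\,(T;\lambda)\geqslant2$) and sufficient to make $v$ the desired vertex. Write $m=\text{{\sl mult}}\,(T;\lambda)\geqslant2$ and let $\mathscr{E}$ be the corresponding eigenspace, so $\dim\mathscr{E}=m$.

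First I would produce the configuration. Fix any $w\in\mathpzc{V}(T)$. The evaluation functional $z\mapsto z(w)$ on $\mathscr{E}$ has a kernel of dimension at least $m-1\geqslant1$, so there is a nonzero $y\in\mathscr{E}$ with $y(w)=0$. Then the support of $y$ is a nonempty proper subset of $\mathpzc{V}(T)$, and connectivity of $T$ yields an edge $uv$ with $y(u)\neq0$ and $y(v)=0$. Let $T_u$ be the component of $T-v$ containing $u$. Since $T$ is a tree, $u$ is the unique neighbor of $v$ lying in $T_u$, so the eigenvalue equation of $T$ restricts on $T_u$ to that of $T_u$ (the single term contributed by $v$ equals $y(v)=0$); hence $z^{*}:=y|_{T_u}$ is a $\lambda$-eigenvector of $T_u$ with $z^{*}(u)=y(u)\neq0$.

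Next I would show this forces $\text{{\sl mult}}\,(T-v;\lambda)=m+1$. The crux is that every $z\in\mathscr{E}$ vanishes at $v$: as $v$ is the only neighbor of $u$ outside $T_u$, one has $(\EuScript{A}(T_u)-\lambda I)\,z|_{T_u}=-z(v)\,e_u$ with $e_u$ the indicator of $u$; pairing with $z^{*}$ and using symmetry of $\EuScript{A}(T_u)$ kills the left-hand side, giving $z(v)\,z^{*}(u)=0$ and hence $z(v)=0$. Consequently restriction to $T-v$ is a well-defined injection of $\mathscr{E}$ into the $\lambda$-eigenspace $\mathscr{F}$ of $T-v$, and the eigenvalue equation at $v$ shows its image is exactly the set of tuples $(w_1,\ldots,w_d)\in\mathscr{F}$ with $\sum_{j=1}^{d}w_j(u_j)=0$, where $T_1,\ldots,T_d$ are the components of $T-v$ and $u_j$ is the neighbor of $v$ in $T_j$ (so $T_1=T_u$, $u_1=u$); conversely every such tuple glues back, with value $0$ at $v$, to a member of $\mathscr{E}$. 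This image is a proper hyperplane, since the tuple extending $z^{*}$ by zero on the other components lies in $\mathscr{F}$ yet has $\sum_j w_j(u_j)=z^{*}(u)\neq0$. Therefore $\dim\mathscr{F}=\dim\mathscr{E}+1=m+1$, as required. (Interlacing, Theorem~\ref{int}, independently supplies the reverse bound $\text{{\sl mult}}\,(T-v;\lambda)\leqslant m+1$ as a cross-check.)

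The main obstacle is the claim that all eigenvectors of $T$ vanish at the chosen $v$; once this holds, the remainder is bookkeeping on the decoupled components. This is exactly where the tree hypothesis is indispensable: the symmetric-matrix argument relies on $u$ being the \emph{only} vertex of its component adjacent to $v$, so that deleting $v$ perturbs the component operator in the single coordinate $e_u$. In a general graph several vertices of one component may meet $v$, the perturbation is no longer rank one, and the conclusion can fail; confining attention to trees is precisely what keeps this perturbation rank one and drives the multiplicity up by exactly one.
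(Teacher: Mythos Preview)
The paper does not supply a proof of this statement: it is quoted as the Parter--Wiener theorem and attributed to the references \cite{p,w}. So there is no ``paper's own proof'' to compare against.

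Your argument is correct and self-contained. The two essential steps are sound: (i) from $\dim\mathscr{E}\geqslant2$ you extract a $\lambda$-eigenvector $y$ whose support is a nonempty proper subset of $\mathpzc{V}(T)$, and connectivity then furnishes an edge $uv$ with $y(u)\neq0$, $y(v)=0$; since $u$ is the \emph{only} neighbour of $v$ in its component $T_u$ (this is the tree hypothesis), $y|_{T_u}$ is a genuine $\lambda$-eigenvector $z^{*}$ of $T_u$ with $z^{*}(u)\neq0$. (ii) The identity $(\EuScript{A}(T_u)-\lambda I)\,z|_{T_u}=-z(v)\,e_u$ for every $z\in\mathscr{E}$, paired against $z^{*}$, forces $z(v)=0$; then restriction embeds $\mathscr{E}$ into the $\lambda$-eigenspace $\mathscr{F}$ of $T-v$, the image is precisely the hyperplane $\{\sum_j w_j(u_j)=0\}$, and this hyperplane is proper because $(z^{*},0,\ldots,0)$ violates the constraint. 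Hence $\dim\mathscr{F}=m+1$.

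This is essentially the classical proof, and it isolates clearly why the result is specific to trees: the rank-one perturbation $(\EuScript{A}(T_u)-\lambda I)z|_{T_u}=-z(v)e_u$ depends on $u$ being the unique vertex of its component adjacent to $v$, which is automatic in a tree but can fail otherwise.
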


In the next  theorem, we  generalize an interesting result of  \cite{bro} by a short and simple proof.  We start with the following easy lemma.

\begin{lem}\label{tek}
Let  $T$ be  a tree  with      no eigenvalue in $(0, 1)\cup(1, 2)$.  Then the  order of  $T$ is at most $2\,{\rm mult}(T; 0)+4\,{\rm mult}(T; 1)-1$.
\end{lem}

\begin{proof}{
Since the spectrum of eigenvalues of $T$ is symmetric around the origin \cite[p. 6]{bh} and
the sum of squares of all eigenvalues of $T$ equals twice the number of its edges  \cite[Proposition 1.3.1]{bh},
we obtain that
$$4\big(n-{\rm mult}(T; 0)-2\,{\rm mult}(T; 1)\big)\leqslant2(n-1).$$
This follows the assertion.
}\end{proof}

\begin{thm}\label{nul1} Let  $T$ be  a tree  with  nullity $1$ and   no eigenvalue in $(0, 1)\cup(1, 2)$.  Then  $T=S(p)$ for some   $p\geqslant0$.
\end{thm}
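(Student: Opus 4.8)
The plan is to keep the two hypotheses strictly separate, since they behave very differently, and to run an induction on the order driven \emph{only} by the $(0,1)$-gap. First I would record that $T$, being a tree, is bipartite, so its spectrum is symmetric about $0$; as $T$ has no eigenvalue in $(0,1)$ it has none in $(-1,0)$ either, and hence the only eigenvalue of $T$ in $(-1,1)$ is the simple eigenvalue $0$. In the language of Section \ref{reducedsec} this says exactly $m(T)=1$. If $T=P_1$ we are done, so assume $T\neq P_1$. By Lemma \ref{=} removing a pendant $P_2$ preserves the nullity, and by Lemma \ref{delp2asl} it does not increase $m$; hence along any reduction of $T$ to its reduced form every intermediate tree $T'$ satisfies $1=\text{{\sl mult}}\,(T';0)\leqslant m(T')\leqslant m(T)=1$, so each has nullity $1$ and $m=1$. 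In particular the reduced form has $m\leqslant1$ and nullity $1$, so by Lemma \ref{kzero} it equals $P_1$; thus $T$ really does have a pendant $P_2$ to strip.

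Next I would prove, by induction on $|\mathpzc{V}(T)|$, that every tree with nullity $1$ and $m=1$ lies in the family $\mathcal F=\{P_1,P_3,P_5\}\cup\{S(p):p\geqslant1\}$. (Each member has nullity $1$ and all eigenvalues in $\{0,\pm1\}\cup\{\pm\sqrt{p+3}\}$, so indeed $m=1$.) For the step I strip one pendant $P_2$ to obtain $T^*$, which by the previous paragraph again has nullity $1$ and $m=1$ but smaller order, so $T^*\in\mathcal F$ by induction, and $T$ is $T^*$ with a pendant $P_2$ attached at some vertex $v$. I then claim that the only attachment sites keeping $m(T)=1$ are the central ones — the centre of $S(p)$ or of $P_5$, an end-vertex of $P_3$, the unique vertex of $P_1$ — which reproduce $S(p+1)$, $S(1)$, $P_5$, $P_3$ respectively, so $T\in\mathcal F$; every other site must force $m(T)\geqslant2$.

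The crux is this last exclusion. If $v$ is not central, the new pendant $P_2$ together with the adjacent leg forms an induced $P_4$, say $w_1w_2w_3w_4$, joined to the rest of $T$ by a single edge incident to some $w_j=:w$. Let $z$ be the eigenvector of this $P_4$ for the eigenvalue $\theta=(\sqrt5-1)/2$, extended by zero; a one-line computation gives $\|\EuScript{A}(T)z\|^2=\theta^2\|z\|^2+z_w^2$, and since $z_w^2\leqslant(1-\theta^2)\|z\|^2$ this is $<\|z\|^2$, i.e. $z^{\top}(\EuScript{A}(T)^2-I)z<0$. Letting $z_0$ span the kernel, so $(\EuScript{A}(T)^2-I)z_0=-z_0$, the form $\EuScript{A}(T)^2-I$ restricted to $\mathrm{span}(z_0,z)$ has Gram matrix $\begin{pmatrix}-\|z_0\|^2&-\langle z_0,z\rangle\\ -\langle z_0,z\rangle& z^{\top}(\EuScript{A}(T)^2-I)z\end{pmatrix}$; checking that its determinant is positive makes this plane negative definite for $\EuScript{A}(T)^2-I$, so $\EuScript{A}(T)^2$ has at least two eigenvalues below $1$ and $m(T)\geqslant2$, a contradiction. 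Finally the so-far-unused hypothesis of no eigenvalue in $(1,2)$ kills $P_3$ and $P_5$, whose top eigenvalues $\sqrt2$ and $\sqrt3$ lie in $(1,2)$, while each $S(p)$ has $\lambda_1=\sqrt{p+3}\geqslant2$ and all other eigenvalues in $\{0,\pm1\}$; hence $T=P_1$ or $T=S(p)$ with $p\geqslant1$.

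The step I expect to be the main obstacle is precisely the determinant/overlap estimate above. Interlacing (Theorem \ref{int}) is useless for it, since passing to an induced subgraph moves eigenvalues the wrong way, and one cannot argue merely from the \emph{presence} of an induced $P_4$: the very same configuration sits harmlessly inside $P_5$ and inside every $S(p)$, where the kernel vector $z_0$ overlaps the test vector $z$ so strongly that the Gram determinant turns negative and no second low eigenvalue appears. Thus only the quantitative Rayleigh comparison, weighing $z_w^2$ and $\langle z_0,z\rangle^2/\|z_0\|^2$ against $1-\theta^2$, separates the harmful off-centre attachments from the harmless central one, and carrying out that comparison uniformly over all members of $\mathcal F$ and all attachment sites is the real content of the argument.
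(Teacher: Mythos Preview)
Your approach is genuinely different from the paper's and can in fact be completed, but as written it has a gap: you explicitly leave the key determinant inequality
\[
\|z_0\|^2\bigl(\theta\|z\|^2 - z_w^2\bigr) \;>\; \langle z_0, z\rangle^2
\]
unverified, calling it ``the main obstacle'' and ``the real content of the argument''. Without it the induction step is not established. The check is actually routine once one writes down $z_0$ explicitly. For $T^*=S(p)$ with the pendant $P_2$ attached at a leaf $b_i$, one finds $\|z_0\|^2=p+4$, $z_w^2=\alpha^2$, and $\langle z_0,z\rangle=\alpha+\beta$, so the inequality becomes $(p+4)\bigl(\tfrac{5}{2}\theta-\alpha^2\bigr)>(\alpha+\beta)^2$; the tightest instance is $p=0$ (i.e.\ $T=P_7$), where numerically it reads about $2.56>2.37$. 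Attachments at a degree-$2$ vertex $a_i$ and at the centre of $P_3$ are easier since then $z_w^2=\beta^2$. So the plan does go through, but you still have to carry it out rather than merely announce it.

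The paper's proof is considerably shorter and uses none of this machinery. It splits on whether $\text{{\sl mult}}\,(T;1)\leqslant 1$ or $\text{{\sl mult}}\,(T;1)\geqslant 2$. In the first case the identity $\sum_i\lambda_i^2=2(n-1)$ together with the hypothesis (no eigenvalue in $(0,1)\cup(1,2)$, spectrum symmetric) gives $4(n-3)\leqslant 2(n-1)$, hence $n\leqslant 5$, and inspecting small trees leaves only $P_1$. In the second case the Parter--Wiener theorem (Theorem~\ref{parter}) yields a vertex $v$ with $\text{{\sl mult}}\,(T-v;1)=\text{{\sl mult}}\,(T;1)+1$; interlacing then forces $m(T-v)=0$, so by Corollary~\ref{m0} every component of $T-v$ is $P_2$ and $T=S(p)$ immediately. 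Thus the paper exploits the $(1,2)$-gap from the outset (in the trace bound) and trades your explicit test-vector estimates for a single invocation of Parter--Wiener. Your route is more elementary and, as a bonus, yields the pleasant intermediate classification $\{P_1,P_3,P_5\}\cup\{S(p):p\geqslant1\}$ of all trees with nullity~$1$ and $m=1$; but it is substantially longer and, as presented, unfinished.
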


\begin{proof}{
If ${\rm mult}(T; 1)\leqslant1$, then  Lemma \ref{tek} implies  that  $T$ is of order  at most   $5$.  We know from \cite[Table 2]{cvet} that, among the trees of order  at most five,  $S(0)$ is the only tree satisfying the assumption of the theorem. So, assume  that ${\rm mult}(T; 1)\geqslant2$. From  Theorem  \ref{parter}, there exists  a vertex  $v$  such that   ${\rm mult}(T-v; 1)={\rm mult}(T; 1)+1$. Hence,    Theorem \ref{int} implies that  $m(T-v)=0$. It follows  from Lemma  \ref{m0} that $T-v$ is a vertex disjoint union of some  copies  of $P_2$,  yielding the result.
}\end{proof}

The following conclusion,  which is first appeared in \cite{bro},  should be clear from  \eqref{sp} and  Theorem \ref{nul1}.

\begin{cor}
Each   integral  tree  with nullity $1$ is of the form $S(p^2-1)$ for some  $p\geqslant1$.
\end{cor}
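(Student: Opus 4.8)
The plan is to derive this directly from Theorem \ref{nul1} together with the explicit characteristic polynomial $\varphi(S(p))=x(x^2-p-3)(x^2-1)^{p+1}$ recorded just above the statement. First I would observe that any integral tree has, in particular, no eigenvalue lying in the open intervals $(0,1)$ or $(1,2)$, simply because every one of its eigenvalues is an integer. Hence an integral tree $T$ of nullity $1$ automatically satisfies the hypotheses of Theorem \ref{nul1}, and so $T$ is either the one-vertex tree or $T=S(p)$ for some $p\geqslant1$. The one-vertex tree is degenerate, its only eigenvalue being $0$, so the substantive case to analyze is $T=S(p)$.

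For $T=S(p)$ I would read off the eigenvalues from the displayed factorization of $\varphi(S(p))$: they are $0$, together with $\pm1$ (each of multiplicity $p+1$) and the two roots $\pm\sqrt{p+3}$ of $x^2-p-3$. The factors $x$ and $(x^2-1)^{p+1}$ already contribute only integers, so $T$ is integral precisely when $\sqrt{p+3}$ is an integer, that is, when $p+3$ is a perfect square. Writing $p+3=q^2$ gives $p=q^2-3$, and the constraint $p\geqslant1$ forces $q^2\geqslant4$, whence $q\geqslant2$. Renaming $q$ as $p$ then yields $T=S(p^2-3)$ with $p\geqslant2$, which is exactly the asserted form.

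I do not anticipate any genuine obstacle: once Theorem \ref{nul1} is available, the argument reduces to imposing integrality on a single quadratic factor and keeping track of the admissible range of the parameter. The only points requiring a moment's care are the degenerate one-vertex tree, which must either be excluded by convention or regarded as the limiting $q=1$ case, and the routine check that the two remaining factors of $\varphi(S(p))$ can never introduce a non-integer eigenvalue, so that the condition ``$p+3$ is a perfect square'' is both necessary and sufficient for integrality.
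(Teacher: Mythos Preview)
Your proposal is correct and mirrors exactly what the paper does: it records the factorization $\varphi(S(p))=x(x^2-p-3)(x^2-1)^{p+1}$ and then simply says the corollary follows from Theorem~\ref{nul1}, leaving the reader to impose integrality on the factor $x^2-p-3$. Your remark about the one-vertex tree is apt---the paper's statement silently excludes this trivial case.
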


\begin{thm}\label{nul2} Let  $T$ be a tree with  nullity $2$ and  no eigenvalue in $(0, 1)\cup(1, 2)$. Then  either $T$ is  the tree in depicted in  Figure \ref{y} or $T=S(p, q)$ for some nonnegative integers $p,  q$.
\end{thm}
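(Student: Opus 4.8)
The plan is to mirror the proof of Theorem \ref{nul1}, splitting on $\text{{\sl mult}}\,(T;1)$ and peeling off a single vertex with the Parter{\bf--}Wiener theorem. Since $T$ is bipartite its spectrum is symmetric about $0$, so the hypothesis that $T$ has no eigenvalue in $(0,1)\cup(1,2)$ also yields no eigenvalue in $(-2,-1)\cup(-1,0)$; hence the only eigenvalues of $T$ in $(-1,1)$ are the two zeros, giving $m(T)=2$. I would then treat $\text{{\sl mult}}\,(T;1)\leqslant 1$ and $\text{{\sl mult}}\,(T;1)\geqslant 2$ separately.

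For the first case I would bound the order $n=|\mathpzc{V}(T)|$. The symmetry of the spectrum together with the identity $\sum_i\lambda_i(T)^2=2(n-1)$ forces the positive eigenvalues, each lying in $\{1\}\cup[2,\infty)$ with at most one equal to $1$, to obey a linear inequality that gives $n\leqslant 6$. A finite check of the trees of order at most $6$ with nullity $2$ and the prescribed spectral gap leaves only the tree with spectrum $\{2,1,0,0,-1,-2\}$, namely the double star of Figure \ref{t}.

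Now assume $\text{{\sl mult}}\,(T;1)\geqslant 2$. By Theorem \ref{parter} there is a vertex $v$ with $\text{{\sl mult}}\,(T-v;1)=\text{{\sl mult}}\,(T;1)+1$. Exactly as in the nullity-$1$ argument I would use the interlacing of Theorem \ref{int} to locate the positions of the eigenvalues $\pm 1$ of $T-v$: they occupy consecutive blocks separated by a single eigenvalue $0$, so $T-v$ has precisely one eigenvalue in $(-1,1)$ and none in $(1,2)$. Thus the components of the forest $T-v$ have no eigenvalue in $(0,1)\cup(1,2)$ and their nullities sum to $1$; by Corollary \ref{m0} and Theorem \ref{nul1}, $T-v$ is a disjoint union of copies of $P_2$ together with exactly one component equal to $P_1$ or some $S(p')$.

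Finally I would reconstruct $T$ by restoring $v$, which joins to one vertex of each component. Since $T$ and $T-v$ have nullities $2$ and $1$, Corollary \ref{pm} gives equal matching numbers, so $v$ is left unsaturated by some maximum matching and hence every neighbour of $v$ must be covered by every maximum matching of its component. A short structural check identifies these mandatory attachment vertices as both vertices of each $P_2$ and exactly the vertices of $S(p')$ adjacent to a pendant vertex, and it rules out a $P_1$ component (whose single vertex is never of this kind). Attaching $v$ to such a vertex of $S(p')$ and turning each $P_2$ into a pendant $P_2$ at $v$ produces precisely $S(p',q)$, where $q$ is one less than the number of $P_2$ components; the degenerate sub-case with no $P_2$ component is discarded since it either has $\text{{\sl mult}}\,(T;1)\leqslant 1$ or acquires an eigenvalue in $(1,2)$, contrary to our standing hypotheses. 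I expect this reconstruction—controlling where $v$ attaches and excluding the degenerate configurations—to be the main obstacle, the two earlier cases being routine once the interlacing bookkeeping is in place.
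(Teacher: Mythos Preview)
Your proposal is correct and follows essentially the same route as the paper: split on $\text{{\sl mult}}\,(T;1)$, bound the order in the small case via $\sum_i\lambda_i(T)^2=2(n-1)$ and a finite check, and in the large case apply Parter{\bf--}Wiener plus interlacing to reduce $T-v$ to Theorem~\ref{nul1}, then reconstruct $T$ from the matching constraint of Corollary~\ref{pm}. Your bound $n\leqslant 6$ is slightly sharper than the paper's $n\leqslant 7$, and you are explicit about discarding the degenerate configuration with no $P_2$ component (which does have an eigenvalue in $(1,2)$ and which the paper's proof passes over when asserting $T=S(p,q)$); otherwise the two arguments coincide.
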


\begin{proof}{
If   ${\rm mult}(T; 1)\leqslant1$, then  Lemma \ref{tek} yields that the order of $T$ is  at most $7$. We know from \cite[Table 2]{cvet} that,  among the trees of order  at most $7$,    the only tree satisfying the assumption of the theorem is the tree depicted in  Figure \ref{y}. So, assume  that ${\rm mult}(T; 1)\geqslant2$.  By Theorem  \ref{parter}, there exists a vertex $v$   such that ${\rm mult}(T-v; 1)={\rm mult}(T; 1)+1$.
Employing   Theorem \ref{int}, one concludes  that    $T-v$ has nullity $1$ and has   no eigenvalue in $(0, 1)\cup(1, 2)$.
Thus, in view of  Lemma  \ref{m0} and Theorem \ref{nul1},  $T-v$ is of the form   $S(p)\cup qP_2$   for some nonnegative integers  $p, q$.
If the neighbor of $v$ in $S(p)$ is not a vertex of degree $2$, then $T$ would have a perfect matching and so   Corollary \ref{pm} yields that  the   nullity of $T$  would be  $0$, a  contradiction. Hence,    $v$ is adjacent to a vertex of degree $2$ in $S(p)$. This means that $p\geqslant1$ and
$T=S(p-1, q)$, the result follows.
}\end{proof}

By combining  Theorem  \ref{inttr1} and  Theorem \ref{nul2}, the following is obtained.

\begin{cor}
There is only one integral tree with  nullity $2$;  namely,  the tree depicted in  Figure \ref{y}.
\begin{figure}[H]
\centering
\includegraphics[width=0.15\textwidth]{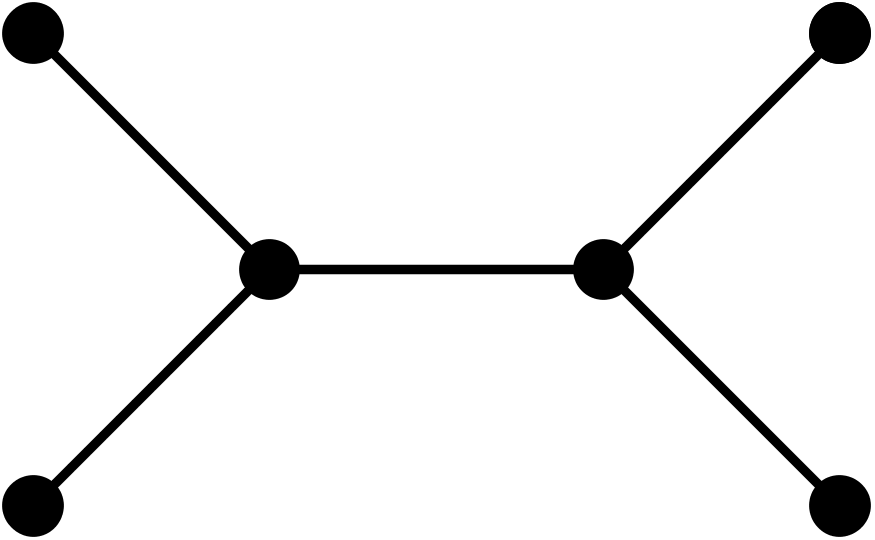}
\caption{The unique integral tree with nullity $2$.}\label{y}
\end{figure}
\end{cor}

\begin{thm}  The star graph  of order $5$  is the only  integral tree with  nullity $3$.
\end{thm}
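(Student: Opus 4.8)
The plan is to imitate the inductive pattern of Theorems \ref{nul1} and \ref{nul2}: describe all trees of nullity $3$ having no eigenvalue in $(0,1)\cup(1,2)$, and then discard every family but the star $K_{1,4}$. Since an integral tree has only integer eigenvalues, it certainly has no eigenvalue in $(0,1)\cup(1,2)$, so I may assume this hypothesis throughout and apply Theorems \ref{nul1} and \ref{nul2}. Let $T$ be an integral tree of order $n$ with nullity $3$; note $n$ is odd, as $n-h$ is even. I would split the argument on the value of $\text{{\sl mult}}\,(T;1)$.

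If $\text{{\sl mult}}\,(T;1)\leqslant1$, then, exactly as in the proofs of Theorems \ref{nul1} and \ref{nul2}, the symmetry of the spectrum and the trace identity $\sum\lambda_i^2=2(n-1)$ bound the order: at most $5$ eigenvalues lie in $\{0,\pm1\}$ (three $0$'s and at most one $\pm1$), so the remaining $n-5$ eigenvalues satisfy $\lambda_i^2\geqslant4$, whence $4(n-5)\leqslant2(n-1)$ and $n\leqslant9$. Because $n$ is odd only $n\in\{5,7,9\}$ need be examined, and requiring all eigenvalues to be integers turns the trace identity into a short Diophantine check ($a^2+b^2+\cdots$ with each summand $\geqslant4$): for $n=7,9$ there is no solution, while for $n=5$ the only possibility is spectrum $\{\pm2,0,0,0\}$, realized uniquely by $K_{1,4}$.

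The substantial case is $\text{{\sl mult}}\,(T;1)\geqslant2$. Here I would invoke the Parter{\bf--}Wiener theorem (Theorem \ref{parter}) to produce a vertex $v$ with $\text{{\sl mult}}\,(T-v;1)=\text{{\sl mult}}\,(T;1)+1$, and then rerun the counting from the proof of Theorem \ref{nul2}: interlacing (Theorem \ref{int}) pins the number of eigenvalues of $T-v$ that are $\geqslant2$ and $\geqslant1$, the spectral symmetry pins the number of positive eigenvalues, and the parity of $n-1$ then forces $T-v$ to have nullity exactly $2$ (rather than $4$) and again no eigenvalue in $(0,1)\cup(1,2)$. Decomposing the forest $T-v$ into components and applying Corollary \ref{m0}, Theorem \ref{nul1} and Theorem \ref{nul2} component-wise yields $T-v=aP_2\cup X$, where $X$ is either the tree of Figure \ref{t} (spectrum $\{\pm2,\pm1,0,0\}$), or $S(p,q)$, or a disjoint union $S(p)\cup S(q)$. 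A deficiency argument through Corollary \ref{pm} governs the reattachment of $v$: as $T$ has nullity $3>0$ it has no perfect matching, so every neighbour of $v$ must be \emph{essential} (saturated by every maximum matching) in its component; this excludes all $K_1$ components and forces $v$ to meet each subdivided star at a degree-$2$ vertex. In the cases $X=S(p,q)$ and $X=S(p)\cup S(q)$ these constraints rebuild $T$ as $S(r_1,r_2,r_3)$, which is not integral by Theorem \ref{inttr}.

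The one genuinely new computation is the exceptional family $X=$ (tree of Figure \ref{t}): here $v$ is joined to one of the two degree-$3$ centres of that double star and carries $a$ pendant $P_2$'s. Applying the pendant formula (\ref{join}) I would obtain $\varphi(T)=x^3(x^2-1)^a\big(x^4-(a+6)x^2+(4a+6)\big)$, so integrality would require the quartic factor to have integer roots; but its roots satisfy $x^2=\tfrac12\big((a+6)\pm\sqrt{a^2-4a+12}\big)$, and $a^2-4a+12=(a-2)^2+8$ is a perfect square only for $a\in\{1,3\}$, where $x^2\in\{2,5\}$ and $x^2\in\{3,6\}$ respectively, none of which is a perfect square. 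Hence no member of this family is integral, and combining both cases leaves $K_{1,4}$ as the unique integral tree with nullity $3$. I expect the main difficulties to be two points of rigour in the hard case: confirming that the Parter reduction really fixes the nullity of $T-v$ at $2$ (the counting must genuinely exclude $4$), and verifying that the essential-vertex constraint permits reattachment only in the listed ways, so that the single quartic above is indeed the only extra family to eliminate.
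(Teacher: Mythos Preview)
Your overall architecture matches the paper's, and the small-order check, the Parter--Wiener reduction, the nullity-$2$ pinning of $T-v$, and the quartic analysis for the Figure~\ref{t} case are all fine. The genuine gap is in the step ``In the cases $X=S(p,q)$ and $X=S(p)\cup S(q)$ these constraints rebuild $T$ as $S(r_1,r_2,r_3)$.'' Your deficiency argument shows only that every neighbour of $v$ is \emph{essential} in its component, and you then assert that in $S(p,q)$ the essential vertices are exactly the degree-$2$ vertices. That is false: the vertex $v_3$ (the common neighbour of the two central vertices, carrying its own pendant $v_3'$) has degree $3$ and is essential, since removing it drops the maximum matching from $p+q+3$ to $p+q+2$. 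Attaching $v$ (with its $r$ pendant $P_2$'s) at $v_3$ produces a tree that is \emph{not} of the form $S(\cdot,\cdot,\cdot)$, so Theorem~\ref{inttr} does not apply. The paper handles this family by an explicit computation via (\ref{join}), obtaining the cubic $g(y)=(y-a)(y-b)(y-c)-3y+a+b+c-2$ with $a=p+3$, $b=q+3$, $c=r+2$, and then a sign analysis at $y=a,a+1,a+2$ to force a non-square root (with one exceptional parameter choice dispatched separately).

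There is a second, smaller omission in the same case: even when $v$ does attach at a degree-$2$ vertex of $S(p,q)$, the identification $T=S(r-1,p,q)$ requires $r\geqslant1$. When $r=0$ the vertex $v$ is a leaf and the resulting tree has a vertex with two pendant leaves, which never occurs in any $S(r_1,\ldots,r_n)$; the paper treats this subcase by another explicit $\varphi$-computation showing a root of the sextic factor in $(1,2)$. Both of these missing families need their own elimination arguments before your proof closes.
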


\begin{proof}{
Let $T$ be an integral tree with nullity $3$.
If  ${\rm mult}(T; 1)\leqslant1$, then it follows  from   Lemma \ref{tek} that $T$ has at most $9$ vertices.
We know from \cite[Table 2]{cvet} that,  among the trees of order  at most $9$,  there is only one integral tree with nullity $3$ that is  the star graph of order $5$, we are done.
Towards a contradiction, suppose that   ${\rm mult}(T; 1)\geqslant2$.
From Theorem  \ref{parter}, there exists a vertex $v$  such that ${\rm mult}(T-v; 1)={\rm mult}(T; 1)+1$.
Moreover, by Theorem \ref{int}, $T-v$ has nullity $2$ and  has no eigenvalue in $(0,1)\cup(1,2)$.
It easily follows from Lemma \ref{=} that
$T-v$ has  no isolated vertex.
From Theorems \ref{nul1} and \ref{nul2}, it follows  for some nonnegative integers $p,  q, r$ that  $T-v$ is of one of the following forms:
\begin{itemize}
\item[(i)] $S(p)\cup S(q)\cup rP_2$;
\item[(ii)] $S(p, q)\cup rP_2$;
\item[(iii)] $Y\cup rP_2$, where $Y$ is the tree depicted in Figure \ref{y}.
\end{itemize}
If (i) is the case, then  by Corollary \ref{pm},   $v$ is necessarily adjacent to two vertices of degree $2$ in $S(p)$ and $S(q)$.
This means that $p, q\geqslant1$ and  $T=S'(p-1, r, q-1)$, which contradicts  Theorem \ref{inttr2}.
In the case (ii), it follows from  Corollary \ref{pm} that  the neighbor of  $v$ in $S(p, q)$ is adjacent    to a vertex of degree $1$.
This implies that  $T\in\{S(p, q, r), S'(r, p-1, q), S'(p, q-1, r)\}$ and so by   Theorem \ref{inttr2}, we find  that $T=S(0, 0, 0)$.  This is a contradiction, since
 ${\rm mult}(T;  1)\geqslant2$.
For the case (iii),   using  Corollary \ref{pm},  $v$ is necessarily adjacent to one of the two vertices of degree $3$ in $Y$. By  applying  Lemma \ref{join}, we   find that   $\varphi(T)=x^3(x^2-1)^r(x^4-(r+6)x^2+4r+6)$. From the intermediate value theorem,
it is easily seen that $\varphi(T)$ has a zero  in $(1, 2)$, a contradiction.
The proof is now complete.
}\end{proof}

We mention here that one can apply  a similar  method  to find all integral trees with other small  nullities  which  of course would be an elaborate task. By  \cite{bro}, among trees up to fifty
vertices,   there is no integral tree with nullities $4$, $6$,  or $9$. Therefore, one may ask  if there exist  integral trees with nullity  $4$. Further, one may ask  a more general question: Does  exist arbitrarily large integer $h$ such that there is no integral tree with nullity $h$?  Eventually, we pose the question: For given integers $m, k\geqslant1$, is the number of integral trees  with eigenvalue $m$ of  multiplicity $k$ finite?

\section*{Acknowledgments}

The authors are indebted to an anonymous referee for pointing out an error in the proof of the main result in an earlier version of the manuscript.
The authors thank both referees for their helpful comments and suggestions  which considerably improved the presentation of the article.
This research  was in part supported
by grants from IPM to  the first  author (No. 91050114) and   the second   author (No. 91050405).

{}

\end{document}